\documentclass[11pt, reqno]{amsart}

\usepackage[margin=1in]{geometry}  % set the margins to 1in on all sides
\usepackage{graphicx}              % to include figures
\usepackage{amsmath}               % great math stuff
\usepackage{amsfonts}              % for blackboard bold, etc
\usepackage{amsthm}      % better theorem environments
\usepackage{mathtools}          
\usepackage{color}
\usepackage{tabulary}
\usepackage{caption}
\usepackage{subcaption}
\usepackage{amssymb}
\usepackage{todonotes}
\theoremstyle{plain}
\usepackage{mathptmx}
\usepackage{tikz-cd} 
\usepackage{mathrsfs}
\usepackage{verbatim}
% various theorems, numbered by section
\usepackage{tikz}
\usepackage{enumitem}
\usepackage{hyperref}
% \usepackage[hyphenbreaks]{breakurl}

%\usetikzlibrary{graphs,graphdrawing}
%\usegdlibrary{trees}
%\usepackage{forest}

\newtheorem{thm}{Theorem}[section]
\newtheorem{lem}[thm]{Lemma}
\newtheorem{prop}[thm]{Proposition}
\newtheorem{cor}[thm]{Corollary}

\theoremstyle{definition}
\newtheorem{defn}[thm]{Definition}
\newtheorem{rmk}[thm]{Remark}

\newtheorem{fact}[thm]{Fact}

\DeclareMathOperator{\Id}{Id}

\DeclareMathOperator{\Aut}{Aut}

  % for bolding symbols
      %
\newcommand{\ZZ}{\mathbb{Z}}      % for Integers
 
      % for Real numbers
\newcommand{\CC}{\mathbb{C}}      % for Real numbers
      % for Integers
      % for Integers

%%%%%%%%%%%%%%%%%%%%%%%%%%%%%%%%%%%%%%%%%%%%%%%%%%%%%%%%%%%%%%%%%%%%%%

\DeclareMathOperator{\PSL}{PSL}

\DeclareMathOperator{\BS}{BS}

\numberwithin{equation}{section}

%%%%%%%%%%%%%%%%%%%%%%%%%%%%%%%%%%%%%%%%%%%%%%%%%%%%%%%%%%%%%%%%%%%%%%

\begin{document}

\title{Bi-orderability and generalized torsion elements from the perspective of profinite properties}
\author{Wonyong Jang}
\address{Department of Mathematical Sciences, KAIST,
291 Daehak-ro, Yuseong-gu, 
34141 Daejeon, South Korea}
\email{jangwy@kaist.ac.kr}

\author{Junseok Kim}
\address{Department of Mathematics,
        Technion - Israel Institute of Technology, 
        Technion City, Haifa, Israel, 3200003}
\email{jsk8818@campus.technion.ac.il}
\subjclass[2020]{Primary 06F15, 20E18, 20F60, 20F65, Secondary 20E26}

\maketitle

%%%%%%%%%%%%%%%%%%%%%%%%%%%%%%%%%%%%%%%%%%%%%%%%%%%%%%%%%%%%%%%%%%%%%%

\begin{abstract}
Using fiber products, we construct bi-orderable groups from left-orderable groups. 
As an application, we show that bi-orderability is not a profinite property, answering a question of Piwek and Wykowski negatively.
We also show that the existence of a generalized torsion element is not a profinite property.
\end{abstract}

\vspace{0.6cm} 

% \noindent
% \textbf{AMS Classification numbers (2020)} Primary: 06F15, 20E18, 20F60, Secondary: 20E06, 20E22, 20E26

% 06F15 Ordered groups
% 20E18 Limits, profinite groups
% 20F60 Ordered groups (group-theoretic aspects)
% 20E06 Free products of groups, free products with amalgamation, Higman-Neumann-Neumann extensions, and generalizations
% 20E22 Extensions, wreath products, and other compositions of groups
% 20E26 Residual properties and generalizations; residually finite groups

% 20J05 Homological methods in group theory?

% Primary: 20F65, 20F67, Secondary: 20F69

\vspace{0.6cm} 

%%%%%%%%%%%%%%%%%%%%%%%%%%%%%%%%%%%%%%%%%%%%%%%%%%%%%%%%%%%%%%%%%%%%%%

%%%%%%%%%%%%%%%%%%%%%%%%%%%%%%%%%%%%%%%%%%%%%%%%%%%%%%%%%%%%%%%%%%%%%%

\section{Introduction} \label{sec:intro}

 Understanding a finitely generated group by its finite quotients has been a topic of much interest in low-dimensional geometry and topology as well as geometric group theory. 
 Amongst finitely generated residually finite groups, a small number of groups can be determined up to isomorphism by their sets of finite quotient groups.
 This list includes 
 some triangle groups \cite{MR4386043},
 some arithmetic lattices in $\PSL(2,\CC)$ \cite{bridson2020absolute},
 affine Coxeter groups (\cite{paolini2024profinite} and \cite{corson2024profinite}), the fundamental group of some closed fibered hyperbolic 3-manifolds \cite{MR4831034}, lamplighter groups (\cite{blachar2024profinite} and \cite{wykowski2025profinite1}), 
 the solvable Baumslag-Solitar groups $\BS(1,n)$ \cite{wykowski2025profinite1}, and finitely generated free metabelian groups \cite{wykowski2025profinite2}.

 Despite the above positive results, there are numerous examples of finitely generated residually finite groups that share the same collection of finite quotients but are not isomorphic.
 Baumslag proved that there are virtually cyclic groups with the same finite quotients but not isomorphic \cite{baumslag1974residually}. Pickel proved the analogous statement for metabelian groups \cite{pickel1974metabelian}.
 In addition, similar phenomena --- namely, that the set of finite quotients cannot determine the group structure --- 
  can be found in the class of finitely presented groups (\cite{MR2119723} and \cite{MR3531665}), 
 in direct products of free groups (\cite{MR2119723} and \cite{corson2023higman}), and in Coxeter groups \cite{corson2023higman}.
% Moreover, Pyber constructed uncountably many finitely generated groups that have the same finite quotients \cite{pyber2004groups}, and there exists a finitely presented group that can be detected by finite quotients amongst finitely presented groups, but not amongst finitely generated groups \cite{bridson2023absolute}.
Moreover, Pyber constructed uncountably many finitely generated groups that have the same finite quotients \cite{pyber2004groups}.
Bridson, Reid, and Spitler constructed a finitely presented group whose isomorphism type can be detected by finite quotients amongst finitely presented groups, but not amongst finitely generated groups \cite{bridson2023absolute}.
 For further details on this rigidity and related pathological examples, we refer to \cite{bridson2025relatively}, \cite{bridson2025chasing}, and references therein.

 Amongst $3$-manifold groups, many properties can be detected by finite quotients of the fundamental group. 
 For example, the 3-dimensional geometry in the sense of Thurston \cite{MR3608716} or the JSJ decomposition \cite{MR3774902} can be detected in this way. 
 Some progress has been made in detecting geometric structures in \mbox{dimension~4}~\cite{MR4458587}.
 
 Similarly, one can ask whether certain group properties are preserved amongst groups with the same finite quotients.
 To describe this notion precisely, we need the concept of profinite completion.
We refer to Subsection~\ref{Sebsec-PP} for the definition.
%We point out that for finitely generated groups $G$ and $H$, they have the isomorphic profinite completion if and only if they have the same finite quotients up to isomorphism (\cite{dixon1982profinite} and \cite{nikolov2007finitely}).
Recall that two finitely generated groups $G$ and $H$ have isomorphic profinite completions if and only if they have the same set of finite quotients (see \cite{dixon1982profinite} and \cite{nikolov2007finitely}).

Accordingly, we can consider a group property detected by finite quotients as follows.
For two finitely generated residually finite groups $G, \ H$ with the same profinite completions $\widehat{G} \cong \widehat{H}$, if $G$ has a group property $\mathcal{P}$, then so does $H$, and vice versa. 
 Such a property is called a \emph{profinite property}, and it is being actively studied.
 The abelianization and groups satisfying a law \cite{piwek2024profinite} are typical examples of a profinite property.
 Unfortunately, numerous properties are known to be not profinite properties.
 For example, amenability \cite{MR4641368}, bounded cohomology \cite{echtler2024bounded}, 
finiteness properties \cite{MR3161766}, Kazhdan’s property (T) \cite{MR2914858} and property FA \cite{cheetham2024property} are not profinite properties.
 In \cite{MR4732948}, Bridson proved that many properties are not profinite properties at once, including being torsion-free, being locally indicable, and left-orderability, etc.

 In this paper, we mainly concentrate on bi-orderability and prove the following.

 \begin{thm}[Corollary \ref{Thm-BOPP}]
 Bi-orderability is not a profinite property.
\end{thm}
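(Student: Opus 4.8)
The plan is to deduce the corollary from two ingredients: the fiber-product construction developed earlier in the paper, and Bridson's theorem \cite{MR4732948} that left-orderability is not a profinite property. Bridson supplies a pair of finitely generated residually finite groups $G_1, G_2$ with $\widehat{G_1} \cong \widehat{G_2}$ such that $G_1$ is left-orderable (indeed torsion-free and locally indicable) while $G_2$ is not left-orderable; since his examples realize the failure of torsion-freeness, local indicability, and left-orderability all at once, one may take $G_2$ to contain a torsion element. My first step is therefore to record that such a pair exists and meets the hypotheses --- finite generation and residual finiteness --- needed to feed them into the construction.

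The second step is to apply the fiber-product construction $\Phi$ to each of $G_1$ and $G_2$, and to invoke its two defining features. First, $\widehat{\Phi(G)}$ depends only on $\widehat{G}$, so from $\widehat{G_1}\cong\widehat{G_2}$ one gets $\widehat{\Phi(G_1)}\cong\widehat{\Phi(G_2)}$; the fiber product is arranged precisely so that a Platonov--Tavgen--type argument transports the profinite isomorphism across. Second, $\Phi$ upgrades left-orderability to bi-orderability, so $\Phi(G_1)$ is bi-orderable. It then remains to see that $\Phi(G_2)$ is \emph{not} bi-orderable, and here I would exploit the failure of left-orderability of $G_2$: one checks that the torsion in $G_2$ produces a generalized torsion element in $\Phi(G_2)$. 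Since a bi-orderable group has no generalized torsion element --- in a bi-invariant order a nontrivial element and all its conjugates have the same sign, so no product of conjugates can vanish --- this shows $\Phi(G_2)$ is not bi-orderable. This is exactly the point of contact with the paper's companion result that the existence of a generalized torsion element is not a profinite property.

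Combining these, $\Phi(G_1)$ and $\Phi(G_2)$ are finitely generated residually finite groups with $\widehat{\Phi(G_1)}\cong\widehat{\Phi(G_2)}$, the first bi-orderable and the second not, which is precisely the assertion that bi-orderability is not a profinite property. The main obstacle is not this final assembly, which is formal, but the two properties of $\Phi$ invoked above: engineering the fiber product so that it simultaneously (a) converts a merely left-invariant order on $G_1$ into a genuinely conjugation-invariant bi-order on $\Phi(G_1)$, and (b) keeps the profinite completion of $\Phi(G)$ under the control of $\widehat{G}$ alone. Reconciling (a) with the residual-finiteness and profinite bookkeeping required for $\widehat{\Phi(G_1)}\cong\widehat{\Phi(G_2)}$ is the heart of the matter and is where the careful choice of the fibered data does the real work; the deduction of the corollary itself then only has to verify that Bridson's specific pair satisfies the hypotheses and that the non-left-orderable member acquires a generalized torsion element downstream.
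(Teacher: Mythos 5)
There is a genuine gap, and it lies in the central mechanism you attribute to the construction. You posit a functor-like operation $\Phi$ with the property that $\widehat{\Phi(G)}$ depends only on $\widehat{G}$, so that Bridson's pair $\widehat{G_1}\cong\widehat{G_2}$ yields $\widehat{\Phi(G_1)}\cong\widehat{\Phi(G_2)}$. No such property is available, and the Platonov--Tavgen engine (Proposition \ref{Fiber_profinite_completion}) actually points the other way: it gives an isomorphism $\widehat{P}\cong\widehat{G_1}\times\widehat{G_2}$ between the fiber product and the profinite completion of its \emph{ambient direct product}, and it does so only when the common quotient is finitely presented with trivial profinite completion and trivial $H_2$. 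In the construction of Theorem \ref{Thm-BO} the quotient is $G$ itself, so the profinite bookkeeping forces $\widehat{G}=1$. Bridson's groups $G_1,G_2$ are nontrivial and residually finite, hence $\widehat{G_i}\neq 1$, so they are inadmissible as the base of the fiber product, and nothing transports the isomorphism $\widehat{G_1}\cong\widehat{G_2}$ across the construction. The paper's profinitely equivalent pair is not $\bigl(\Phi(G_1),\Phi(G_2)\bigr)$ but $\bigl(Q,\ (F_n*S)\times F_n\bigr)$: a single fiber product compared against the ambient product containing it.

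Your second claim --- that torsion in $G_2$ produces a generalized torsion element in $\Phi(G_2)$ --- is also unsupported and in tension with the paper's own results: the fiber product is $F_\infty\rtimes F_n$ (Lemma \ref{P-fbf}), hence torsion-free, and a torsion element $g\in G$ only enters the fiber product paired with an infinite-order word $w\in F_n$ with $\pi_2(w)=g$, so no obvious generalized torsion arises downstairs. In the paper the non-bi-orderable member of the pair is the \emph{ambient} group $(F_n*S)\times F_n$, which visibly contains $S=\mathrm{BS}(1,-1)$ with its generalized torsion element (Lemma \ref{Core-Lem}); the fiber product side is bi-orderable because $Q<P$ and $P$ is bi-orderable by Theorem \ref{Thm-BO}. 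Finally, your proposal does not address residual finiteness of both members of the exhibited pair, which is essential to the definition of profinite property: since $\widehat{G}=1$, the group $(F_n*G)\times F_n$ is \emph{not} residually finite, and the paper's proof of Corollary \ref{Thm-BOPP} works precisely by restricting $\pi_1$ to $F_n*S$ for a finitely generated residually finite $S<G$, so that both $Q$ and $(F_n*S)\times F_n$ are residually finite while Proposition \ref{Fiber_profinite_completion} (applied with base $G$, where $\widehat{G}=1$) still yields $\widehat{Q}\cong\widehat{F_n*S}\times\widehat{F_n}$. This subgroup-restriction step has no counterpart in your outline, and without it the argument does not close.
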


 %As mentioned in the abstract, this answers negatively the question of Piwek and Wykowski in the problem list that came from participants of the workshop ``Profinite Rigidity" held in Madrid from June 26 to June 30, 2023.
 As mentioned in the abstract, this provides a negative answer to the question posed by Piwek and Wykowski in the problem list compiled during the workshop ``Profinite Rigidity" held in Madrid in June 2023.
 
 It is worth noting that the above theorem does not follow directly from the results in \cite{MR4732948}.
 More precisely, for any finitely presented residually finite group $\Gamma$, Bridson constructed a free-by-free group $M_{\Gamma}=F_{\infty}\rtimes F_4$ with an embedding $M_{\Gamma} \hookrightarrow (F_4 * \Gamma) \times F_4$ such that $ \widehat{M_{\Gamma}} \cong \widehat{F_4 * \Gamma} \times \widehat{F_4}.$
 However, a free-by-free group is not necessarily bi-orderable. For further details, we refer to Remark~\ref{free-by-free BO}.
 Thus, while we use the technique introduced in that paper, we slightly modify two epimorphisms to construct a bi-orderable fiber product from a left-orderable group.

\begin{thm}[Theorem \ref{Thm-BO}]
 Let $G$ be a non-trivial finitely generated left-orderable group. Say $G$ is generated by $n$ elements, and consider the following two epimorphisms 
 $$ \pi_1 : F_n*G \twoheadrightarrow G , \qquad \pi_2 : F_n \twoheadrightarrow G $$ where $\pi_2$ is the natural epimorphism, $\pi_1 |_{F_n} = \pi_2$, and $\pi_1|_{G}=\Id_G$, the identity map on $G$.
 Then the corresponding fiber product $P$ is bi-orderable.
\end{thm}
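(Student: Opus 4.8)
The plan is to realise $P$ as a semidirect product with free kernel and free quotient, and then bi-order it by the standard extension principle: if $1\to N\to P\to Q\to 1$ is exact, $Q$ is bi-orderable, $N$ is bi-orderable, and the bi-order on $N$ is invariant under conjugation by every element of $P$, then $P$ is bi-orderable. Projecting $P\subseteq (F_n*G)\times F_n$ to the second factor gives a surjection $p_2:P\to F_n$ with $\ker p_2=\ker\pi_1\times\{1\}$, and $b\mapsto(\iota(b),b)$ (where $\iota:F_n\hookrightarrow F_n*G$ is the inclusion, so that $\pi_1\iota=\pi_2$) is a section. Hence $P\cong \ker\pi_1\rtimes F_n$. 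Since $F_n$ is free it is bi-orderable, so the problem reduces to producing a bi-order on $\ker\pi_1$ that is invariant under the conjugation action of the section $\iota(F_n)$ (invariance under $\ker\pi_1$ itself is automatic, as every bi-order is preserved by inner automorphisms).

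Next I would pin down the structure of $N:=\ker\pi_1$. By the Kurosh subgroup theorem (or, concretely, by passing to the $G$-cover of $(\vee_n S^1)\vee Y$ with $\pi_1(Y)=G$), the group $N$ is free, and a Reidemeister--Schreier computation with transversal $G\le F_n*G$ yields the explicit free basis $\mathcal B=\{\,t_{g,i}:=g\,x_i\,g_i^{-1}g^{-1} : g\in G,\ 1\le i\le n\,\}$, where $g_i=\pi_2(x_i)$. The decisive feature is the conjugation action of the subgroup $G\le F_n*G$ on this basis: for $h\in G$ one computes $h\,t_{g,i}\,h^{-1}=t_{hg,i}$, so $G$ permutes $\mathcal B\cong G\times\{1,\dots,n\}$ freely. (As a consistency check, abelianising gives $N^{ab}\cong\ZZ[G]^n$ as a $\ZZ[G]$-module.) Establishing this clean basis-permutation picture is the main computational step.

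The subtlety is that the action I actually need to control is conjugation by the section $\iota(F_n)$, not by $G$. These are reconciled by one observation: since $\pi_1(\iota(x_j))=g_j$, we have $\iota(x_j)=t_{e,j}\,g_j$ with $t_{e,j}\in N$, so conjugation by $\iota(x_j)$ equals an inner automorphism of $N$ followed by conjugation by $g_j\in G$. As inner automorphisms preserve every bi-order, any $G$-invariant bi-order on $N$ is automatically invariant under all of $\iota(F_n)$. Thus it suffices to construct a bi-order on the free group $N$ invariant under the basis-permutation action of $G$.

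Finally I would build such an order via the Magnus embedding $N\hookrightarrow \ZZ\langle\langle\mathcal B\rangle\rangle$, $t\mapsto 1+t$, declaring $w\succ 1$ when the coefficient of the $\prec$-smallest monomial appearing in $w-1$ is positive, where $\prec$ orders monomials by degree first (so the minimal-degree part, a combination of finitely many monomials, determines the sign even though $\mathcal B$ is infinite) and then by a fixed total order on each set $\mathcal B^d$ of degree-$d$ words. For $G$-invariance it is enough that each chosen order on $\mathcal B^d$ be $G$-invariant, and here the sole hypothesis on $G$ enters decisively: $G$ acts freely on $\mathcal B^d$ (the diagonal of a free action is free), and a free $G$-set admits a $G$-invariant total order precisely when $G$ is left-orderable. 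Since $G$ is left-orderable by assumption, such orders exist, giving a $G$-invariant --- hence $\iota(F_n)$-invariant --- bi-order on $N$; feeding this into the extension principle bi-orders $P$. I expect the extension principle and the Magnus construction to be routine once the basis-permutation description of $\ker\pi_1$ and the inner-automorphism reconciliation are in hand, so those two points are the crux.
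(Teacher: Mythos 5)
Your proposal is correct and takes essentially the same route as the paper: the same splitting $P\cong\ker(\pi_1)\rtimes F_n$, the same Reidemeister--Schreier free basis $\{\,g s_i \widetilde{s_i}^{-1} g^{-1} : g\in G,\ 1\le i\le n\,\}$ for $\ker(\pi_1)$, and the same Magnus-embedding bi-order whose $F_n$-invariance is secured by the left-order on $G$, fed into the extension criterion of Lemma~\ref{BO-ses}. Your factorization of conjugation by $\iota(x_j)$ as (inner automorphism of $N$) composed with (the free $G$-translation of the basis) is a tidy conceptual repackaging of the paper's explicit formulae $s_i\cdot x_{g,j}=x_{1,i}\,x_{\widetilde{s_i}g,j}\,x_{1,i}^{-1}$ in Lemma~\ref{Calculation_s_x} and the leading-term check in the proof of Theorem~\ref{Thm-BO} (likewise, your ``free $G$-set admits a $G$-invariant total order iff $G$ is left-orderable'' abstracts the paper's concrete shortlex order on the variables $X_{g,i}$), not a genuinely different argument.
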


The idea of the proof of Corollary~\ref{Thm-BOPP} is as follows.
We first choose a pair $(G,S)$ as in Lemma~\ref{Core-Lem}. 
The essential properties are the following: $G$ is a finitely presented, left-orderable group with $\widehat G=1$ and $H_2(G;\mathbb Z)=0$, and $S$ is a finitely generated, residually finite subgroup of $G$ that is not bi-orderable.
The left-orderability of $G$ allows us to apply Theorem~\ref{Thm-BO} to obtain a bi-orderable fiber product $P<(F_n*G)\times F_n$.
The conditions $\widehat G=1$ and $H_2(G;\mathbb Z)=0$ together with finite presentability of $G$ are used to apply Proposition~\ref{Fiber_profinite_completion}.

By restricting the epimorphism $\pi_1$ to $q_1=\pi_1|_{F_n * S} : F_n * S \to G$, we construct a new fiber product $Q<(F_n * S) \times F_n$. Since $Q$ is a subgroup of the bi-orderable group $P$, $Q$ is itself bi-orderable. 
Proposition~\ref{Fiber_profinite_completion} then yields an isomorphism of profinite completions
$$ \widehat Q\cong \widehat{F_n*S}\times \widehat{F_n}. $$ Since $(F_n * S) \times F_n$ is not bi-orderable (as it contains $S$), this isomorphism proves that bi-orderability is not a profinite property.

 We say that $g \in G$ is a \emph{generalized torsion element} if there exist finitely many $g_1, \ldots , g_k \in G$ such that the product of conjugates $ \left (g_1^{-1} g g_1 \right ) \cdots \left ( g_k^{-1} g g_k \right )$ is trivial.
 The same strategy for Corollary~\ref{Thm-BOPP} also demonstrates the following statement.

\begin{thm}[Corollary \ref{Thm-GTPP}]
 The existence of a generalized torsion element is not a profinite property.
\end{thm}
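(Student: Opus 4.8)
The plan is to recycle the two groups $Q$ and $(F_n*S)\times F_n$ already produced in the proof of Corollary~\ref{Thm-BOPP}, and to replace the non-bi-orderability bookkeeping there with the corresponding generalized-torsion bookkeeping. The crucial elementary input is the observation that a bi-orderable group cannot contain a nontrivial generalized torsion element. Indeed, if $H$ carries a bi-order $<$ and $g\in H$ is nontrivial, then after possibly replacing $g$ by $g^{-1}$ we may assume $g>1$; bi-invariance of the order gives $g_i^{-1}gg_i>1$ for every $g_i\in H$, and multiplying such positive elements keeps the product positive, so $\left(g_1^{-1}gg_1\right)\cdots\left(g_k^{-1}gg_k\right)>1$ is never trivial. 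Applying this to $Q$, which is bi-orderable because it embeds in the bi-orderable group $P$ of Theorem~\ref{Thm-BO}, I conclude that $Q$ has no nontrivial generalized torsion element.

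Next I would exhibit a generalized torsion element on the other side, in $(F_n*S)\times F_n$. For this it suffices that the subgroup $S$ itself contain a nontrivial generalized torsion element $s$, say with $\left(s_1^{-1}ss_1\right)\cdots\left(s_k^{-1}ss_k\right)=1$ in $S$. This relation is preserved under the inclusions $S\hookrightarrow F_n*S\hookrightarrow (F_n*S)\times F_n$, so $s$ remains a nontrivial generalized torsion element of the ambient group. The point to flag is that in Corollary~\ref{Thm-BOPP} one only needed $S$ to be non-bi-orderable, whereas here I genuinely need $S$ to carry a generalized torsion element, which is a strictly stronger requirement. I therefore want to invoke Lemma~\ref{Core-Lem} in the sharper reading that its subgroup $S$ actually possesses a generalized torsion element; since having such an element already forces $S$ to be non-bi-orderable, this stronger $S$ is equally admissible in the proof of Corollary~\ref{Thm-BOPP}, so no modification of the construction of $(G,S)$ is required.

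Finally I would assemble the pieces. Profinite completion commutes with finite direct products for finitely generated groups, so the isomorphism $\widehat Q\cong\widehat{F_n*S}\times\widehat{F_n}$ furnished by Proposition~\ref{Fiber_profinite_completion} yields
\[
\widehat Q\;\cong\;\widehat{F_n*S}\times\widehat{F_n}\;\cong\;\widehat{(F_n*S)\times F_n}.
\]
Thus $Q$ and $(F_n*S)\times F_n$ are finitely generated residually finite groups with isomorphic profinite completions. Since $(F_n*S)\times F_n$ contains the nontrivial generalized torsion element $s$ while $Q$ contains none, the existence of a generalized torsion element is transported across neither direction of this profinite isomorphism, and the property fails to be profinite.

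I expect the only real obstacle to be the single point isolated above, namely confirming that the subgroup $S$ delivered by Lemma~\ref{Core-Lem} truly carries a generalized torsion element and is not merely non-bi-orderable. Once that is secured, every remaining step is a verbatim transport of the bi-orderability argument, with the implication \emph{bi-orderable $\Rightarrow$ no nontrivial generalized torsion element} playing the role that bi-orderability itself played before.
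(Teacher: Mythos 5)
Your proof is correct and takes essentially the same route as the paper: reuse $Q$ and $(F_n*S)\times F_n$ from Corollary~\ref{Thm-BOPP}, together with the observation that a bi-orderable group has no nontrivial generalized torsion element. The single point you flagged is already secured by the paper's construction, since Lemma~\ref{Core-Lem} takes $S\cong\BS(1,-1)$ and its non-bi-orderability is deduced there precisely from the fact that $\BS(1,-1)$ carries a generalized torsion element.
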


The paper is organized as follows. In Section~\ref{sec:prelim}, we briefly recall the definition and properties of orderable groups, profinite completions, fiber products, and related results.
The Magnus ordering and the Reidemeister--Schreier method are essential to construct our bi-orderable groups, so we add subsections introducing them in Section~\ref{sec:prelim}.
 We introduce the construction of a bi-orderable fiber product from a left-orderable group in Section~\ref{sec:proof1}.
 In the last section, Section~\ref{sec:proof2}, we show two main results, namely Corollary~\ref{Thm-BOPP} and Corollary~\ref{Thm-GTPP}.

\vspace{0.6cm} 

\noindent \textbf{Acknowledgement} 
 We are grateful to Hyungryul Baik and Sang-hyun Kim for helpful discussions and suggestions for qualitative improvement.
 The authors would like to thank Jan Kim for the valuable discussions that contributed to the proof of Lemma \ref{Core-Lem}.
 We thank Julian Wykowski for pointing out the missing references and useful comments.
 We are grateful to Paweł Piwek for careful reading and fruitful conversations.
 The authors would like to thank the anonymous referees for their suggestions to enrich the quality of this paper.
 Both authors have been supported by the National Research Foundation of Korea (NRF) grant funded by the Korea government (MSIT) RS-2025-00513595.
 The second author has also been supported by the Israel Science Foundation Grant 1576/23 (PI Nir Lazarovich).
% 
% 
% 

%%%%%%%%%%%%%%%%%%%%%%%%%%%%%%%%%%%%%%%%%%%%%%%%%%%%%%%%%%%%%%%%%%%%%%

\section{Preliminaries} \label{sec:prelim}
\subsection{Orderable groups}
 In this subsection, we briefly recall the definition of a bi-orderable group, related notions such as positive cones, and well-known results.

\begin{defn}
 Let $G$ be a group.
\begin{itemize}
    \item We say that $G$ is \emph{left-orderable} if there exists a (strict) total ordering $<$ on $G$ such that for every $g,x,y\in G$, $x<y$ implies $gx<gy$.
    \item We say that $G$ is \emph{bi-orderable} if there exists a (strict) total ordering $<$ on $G$ such that for every $g,h,x,y\in G$, $x<y$ implies $gxh<gyh$ .
\end{itemize}
\end{defn}

\begin{defn}
 Let $G$ be a group. A \emph{positive cone} $P$ is a subset of $G$ satisfying 
 \begin{itemize}
    \item for $x,y \in P$, $xy \in P$
    \item for any $g \in G$, exactly one of the following holds: either $g \in P$, $g^{-1} \in P$, or $g$ is the identity element.
 \end{itemize}
\end{defn}
 It is easy to show that $G$ is left-orderable if and only if there exists a positive cone $P \subset G$. If $G$ is left-orderable, define $P\coloneqq \{ g \in G : g > 1 \}$. Conversely, for a given positive cone $P$, declare $x>y$ when $y^{-1}x \in P$.
 Similarly, $G$ is bi-orderable if and only if there exists a conjugation invariant positive cone $P \subset G$, namely, a positive cone satisfying $gPg^{-1} = P$ for all $g \in G$.

 Left-orderable groups and bi-orderable groups share a lot of nice properties, for instance, these properties are closed under taking subgroups, direct products, and free products \cite{deroin2014groups}.
 However, it is well known that left-orderability is invariant under group extension, whereas bi-orderability is not. More precisely, from a short exact sequence
 $$ 1 \to K \to G \to H \to 1, $$
$G$ is left-orderable if $K$ and $H$ are left-orderable.
 In the realm of bi-orderable groups, such a statement is no longer true in general, but the following criterion is well known.

\begin{lem}\cite[Proposition 1.10]{clay2023orderable} \label{BO-ses}
 \ Suppose that we are given a short exact sequence $$ 1 \to K \overset{i}{\to} G \to H \to 1, $$ and $K$ and $H$ are bi-orderable. Let $P_K$ be the positive cone of $K$. If $gi(P_K)g^{-1} \subset i(P_K)$ for all $g\in G$, then $G$ is bi-orderable.
\end{lem}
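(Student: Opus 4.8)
The plan is to build an explicit conjugation-invariant positive cone on $G$ by ordering lexicographically: first according to the quotient $H$, and then, on the kernel, according to $K$. Write $\pi\colon G\to H$ for the quotient map, let $P_H$ be the positive cone of the bi-order on $H$, and recall that $P_K$ is the given positive cone of the bi-order on $K$. I would set
$$ P_G \;\coloneqq\; \pi^{-1}(P_H)\ \cup\ i(P_K). $$
These two pieces are disjoint, since every element of $\pi^{-1}(P_H)$ has nontrivial image in $H$ whereas $i(P_K)$ lies in $\ker\pi$. The goal is then to verify that $P_G$ is a positive cone and that it is invariant under conjugation, so that $G$ is bi-orderable by the characterization recalled just before the lemma.

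First I would check the positive-cone axioms, using only that $P_H$ and $P_K$ are positive cones. For trichotomy, given $g\in G$ I split on whether $\pi(g)\in P_H$, $\pi(g)=1$, or $\pi(g)^{-1}\in P_H$: in the first and third cases the trichotomy for $P_H$ places exactly one of $g,g^{-1}$ in $\pi^{-1}(P_H)$ while keeping both out of $i(P_K)$, and in the middle case $g=i(k)$ for some $k\in K$, so the trichotomy for $P_K$ settles $g$. For closure under multiplication, take $a,b\in P_G$ and argue by four cases according to whether each of $\pi(a),\pi(b)$ lies in $P_H$ or is trivial: whenever at least one lies in $P_H$ the product satisfies $\pi(ab)\in P_H$ by closure of $P_H$, and when both are trivial we have $a=i(k_1)$, $b=i(k_2)$ with $k_1,k_2\in P_K$, so $ab=i(k_1k_2)\in i(P_K)$ by closure of $P_K$.

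The heart of the argument, and the only place the hypothesis is used, is conjugation-invariance. I would verify $gag^{-1}\in P_G$ for $a\in P_G$ and $g\in G$ in two cases. If $\pi(a)\in P_H$, then $\pi(gag^{-1})=\pi(g)\pi(a)\pi(g)^{-1}\in P_H$ because $P_H$ is conjugation-invariant (as $H$ is bi-orderable), so $gag^{-1}\in\pi^{-1}(P_H)$. If instead $\pi(a)=1$, then $a\in i(P_K)$ and the hypothesis $g\,i(P_K)\,g^{-1}\subset i(P_K)$ yields $gag^{-1}\in i(P_K)$ directly. Hence $gP_Gg^{-1}\subset P_G$ for every $g$, and applying this to $g^{-1}$ upgrades the inclusion to equality, so $P_G$ is a conjugation-invariant positive cone.

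I expect the main obstacle to be conceptual rather than computational: recognizing that the extension order must be taken lexicographically with $H$ dominating $K$ (so that the kernel sits at the ``infinitesimal'' level), and pinpointing that the hypothesis is needed only in the kernel case of the conjugation check, every other step following formally from the bi-orderability of $K$ and $H$. A small point worth flagging is that the hypothesis supplies only an inclusion, not an equality; as noted above, this is nonetheless sufficient, since the inclusion holding for all $g$ self-improves to equality.
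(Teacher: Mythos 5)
Your proof is correct, and it is the standard argument: the paper itself states this lemma without proof, citing \cite[Proposition 1.10]{clay2023orderable}, whose proof is exactly your lexicographic construction $P_G=\pi^{-1}(P_H)\cup i(P_K)$ with the hypothesis $g\,i(P_K)\,g^{-1}\subset i(P_K)$ invoked precisely in the kernel case and the inclusion self-improving to equality. Nothing is missing; your verification of trichotomy, closure, and conjugation-invariance matches the cited source's approach step for step.
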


For readers who are interested in more details about orderable groups, we refer to \cite{MR3560661} and \cite{deroin2014groups}.

\subsection{Magnus ordering}
 We dedicate this subsection to describing a specific bi-ordering on the free group $F_n$ using the Magnus expansion. 
 This bi-ordering plays a key role in our construction of a bi-orderable group, and we call this ordering the \emph{Magnus ordering}.
 We only recount the construction for the free group $F_\infty$ with countably infinite bases.
 It is straightforward to modify it for finitely generated free groups.
 We refer to Chapter 3.2 in \cite{MR3560661} for more details.

 Let $\{ x_1, x_2 ,  \cdots,x_n , \cdots \}$ be a free generating set for $F_\infty$ and $\ZZ[[X_n]]$ be the ring of power series with non-commuting variables $X_n$ for $n = 1,2,3, \cdots$. Fix a total ordering on the set $X \coloneqq \{ X_1, X_2, \cdots \}$ of variables.
 This total ordering can be extended to $X^*$, the (free) monoid generated by $X$ via the shortlex order. Namely, for two elements $m_1 \neq m_2 \in X^*$, we define $m_1 < m_2$ as follows:
 First, if the length of $m_1$ is strictly shorter than the length of $m_2$, then $m_1 < m_2$.
 If the lengths of $m_1$ and $m_2$ are the same, compare their first letters. If their first letters are different, since $X$ has a total ordering, we can determine whether $m_1 > m_2$ or $m_1 < m_2$.
 If the first letters are the same, then move to their second letters and compare them. 
 Since $m_1 \neq m_2$, these have different $n$-th letters for some $n$, and we can determine whether $m_1 > m_2$ or $m_1 < m_2$ using the total ordering of $X$.
 
From the total ordering on $X^*$, we can define an ordering on $\ZZ[[X_n]]$ as follows: Pick $f_1, f_2 \in \ZZ[[X_n]]$.
First, compare their constant terms. Since $f_1$ and $f_2$ has coefficients in $\ZZ$, either $c_1>c_2$, $c_1<c_2$ or $c_1=c_2$, where $c_i$ is the constant term of $f_i$ $(i=1,2)$.
If $c_1>c_2$, then we declare $f_1>f_2$. Similarly, $f_1<f_2$ when $c_1<c_2$.
When $c_1=c_2$, find the smallest (except for the constant terms) non-zero terms of $f_1$ and $f_2$, if they exist. 
By ``the smallest term,” we mean the one with respect to the total ordering on $X^*$.
If their smallest non-zero terms are different, the total ordering on $X^*$ determines the ordering between $f_1$ and $f_2$ as follows:
Suppose that $$f_1 = c + d_1 Y_1 + \cdots , \qquad  f_2 = c + d_2 Y_2 + \cdots,$$
where $d_1, d_2 \neq 0$ and $Y_1, Y_2 \in X^*$.
Here, $d_1Y_1$ and $d_2Y_2$ are the smallest terms (excluding the constant terms) of $f_1$ and $f_2$, respectively.
If $Y_1 \neq Y_2$, say $Y_1 < Y_2$, and we can express them as 
$$f_1 = c + d_1 Y_1 + \cdots , \qquad  f_2 = c + 0Y_1 + d_2 Y_2 + \cdots.$$
Thus we declare $f_1 > f_2$ if $d_1 > 0$, and $f_1<f_2$ if $d_1 < 0$. If $Y_1=Y_2$, then define 
$f_1 > f_2$ if $d_1 > d_2$, and $f_1 < f_2$ if $d_1 < d_2$.
When $Y_1=Y_2$ and $d_1=d_2$, then find their second smallest non-zero terms.
% When their smallest non-zero terms are the same, if the coefficients are different, we can also determine the order between them.
% Lastly, suppose that their coefficients are also the same.
Similar to the above, we can determine $f_1>f_2$ or $f_1<f_2$. Otherwise, let us move on to the third smallest non-zero terms.
Since $f_1 \neq f_2$, obviously some $n$-th smallest non-zero terms are different (also considering coefficients), and this explanation gives us an ordering $\ZZ[[X_n]]$.
We point out that this ordering is just a partial ordering (not a total ordering) since some power series cannot have the smallest non-zero terms (or $n$-th smallest non-zero terms).
For example, suppose that $\{ \ \cdots,X_{-2},X_{-1},X_0,X_1,X_2, \cdots \ \}$ is a set of variables. We define a total order on the set of variables by $$ X_i<X_j \iff i<j. $$
Then there are no smallest (except for the constant terms) non-zero terms of the power series $$ f:= 1 + \sum_{m \in \ZZ} X_i, \quad g:= 1 + \sum_{m \in 2 \ZZ} X_i - \sum_{m \in 2 \ZZ -1} X_i. $$
Therefore, the two power series $f$ and $g$ are not comparable with respect to the order.
Nonetheless, this obstruction does not frustrate us in constructing a bi-ordering on $F_{\infty}$, and we will explain why after we define a bi-ordering on the free group.

 Now we are ready to construct a bi-ordering on $F_{\infty}$. 
 Let $\left( \ZZ[[X_n]] \right)^\times$ be the group of units of the ring $\ZZ[[X_n]]$. 
 Fix a total ordering on $X\coloneqq \{ X_1 , X_2, X_3 , \cdots \}$ and consider the group homomorphism
 $i \colon  F_{\infty} \to \left( \ZZ[[X_n]] \right)^\times$ given by
 $$ x_n \mapsto 1+X_n ,\qquad  \ x_n^{-1} \mapsto 1-X_n+X_n^2-X_n^3+ \cdots. $$

 For $w_1 , w_2 \in F_{\infty}$, we declare $w_1 > w_2$ on $F_{\infty}$ if and only if $i(w_1) > i(w_2)$ on $\ZZ[[X_n]]$.
Then this ordering $<$ on $F_{\infty}$ is indeed a bi-ordering.
Since the length of $w_1$ is finite, only finitely many variables are in the power series $i(w_1) \in \ZZ[[X_n]]$. Thus, for each degree, $i(w_1)$ has only finitely many non-zero terms. This implies that the set of non-zero terms of $i(w_1)$ is actually well-ordered by the total ordering on $X^*$.
This argument can be applied to $w_2$, and we conclude that for given $w_1 \neq w_2 \in F_{\infty}$,
we can always determine whether $i(w_1)>i(w_2)$ or $i(w_1)<i(w_2)$ via this ordering. Thus, the ordering $<$ on $F_{\infty}$ is well-defined and a total ordering.
To check this, let $P_< \coloneqq \{ x \in F_{\infty} : x > 1 \}$. Clearly, $xy \in P_<$ when $x, y \in P_<$, and $F_{\infty} = P_< \sqcup P_<^{-1} \sqcup \{ 1 \}$.
 In addition, simple calculation implies that if $w \in P_<$, then for each $n$, we have
 \[ x_n w x_n^{-1} \in P_< ,\qquad \ x_n^{-1} w x_n \in  P_<. \]
 Therefore, the ordering $<$ on $F_{\infty}$ is a bi-ordering.

\subsection{Profinite completion and profinite properties} \label{Sebsec-PP}
 One of the main concepts in this paper is that of profinite properties.
 To introduce this notion, we begin with the definition of profinite groups.

\begin{defn}
 We say that a group $G$ is \emph{profinite} if $G$ is a topological group isomorphic to the inverse limit of an inverse system of discrete finite groups.
\end{defn}

 For any group $G$, there exists a related profinite group $\widehat{G}$, called the \emph{profinite completion} of $G$. We construct $\widehat{G}$ as follows. 
 Let $\mathcal{N}$ be the collection of all finite index normal subgroups in $G$. 
 For $N, M \in \mathcal{N}$, we give a reverse inclusion order, that is, $N \leq M$ if and only if $N \supset M.$ 
 For $N \leq M$, let $p_{NM} \colon  G/M \to G/N$ be the natural projection. 
Then $\mathcal{N}$ is a directed poset and the pair $\left ( \{ G/N \}_{N \in \mathcal{N}} , p_{NM} \right )$ is an inverse system.

\begin{defn}
 For a given group $G$, the pair $\left( \{ G/N \}_{N \in \mathcal{N}}, p_{NM} \right)$ is an inverse system. The inverse limit of this system, denoted by $\widehat{G}\coloneqq \varprojlim G/N$, is called the \emph{profinite completion} of $G$.
\end{defn}

\begin{rmk}
 There is the natural homomorphism $G$ to its profinite completion $\widehat{G}$ given by \[ g \mapsto \{ gN \}_{N \in \mathcal{N}}. \]
This homomorphism is not injective in general. In fact, the homomorphism is injective if and only if $G$ is residually finite.
\end{rmk}

Let $\mathcal{C}(G)$ be the set of isomorphism classes of all finite quotients of $G$.
As mentioned in Introduction, for finitely generated groups $G$ and $H$, $\widehat{G} \cong \widehat{H}$ if and only if $\mathcal{C}(G) = \mathcal{C}(H)$ (\cite{dixon1982profinite} and \cite{nikolov2007finitely}).
Indeed, the following correspondence is well known.

\begin{rmk} \label{first correspondence}
Let $\mathcal{N}(G)$ be the set of all finite index normal subgroups of $G$. When $G, \ H$ are finitely generated and $\mathcal{C}(G)=\mathcal{C}(H)$, we have $\mathcal{N}(G)=\mathcal{N}(H)$. 
Furthermore, we have a map $l:\mathcal{N}(G) \to \mathcal{N}(H)$ such that $G/N$ is isomorphic to $H/l(N)$ \cite{dixon1982profinite}.
\end{rmk}

We refer to \cite{reid2018profinite}, \cite{MR2599132}, and \cite{nikolov2007finitely} for readers who are interested in profinite completion and profinite groups.
Now we introduce the concept of a profinite property

\begin{defn}
 Let $\mathcal{P}$ be a group property of finitely generated residually finite groups. We say that $\mathcal{P}$ is a \emph{profinite property} if for any two finitely generated residually finite groups $G_1$ and $G_2$ with $\widehat{G_1} \cong \widehat{G_2}$, $G_1$ satisfies $\mathcal{P}$ if and only if $G_2$ satisfies $\mathcal{P}$.
\end{defn}

 In other words, if a group property $\mathcal{P}$ is a profinite property, then it can be detected by profinite completion, and equivalently, by taking finite quotients.

We close this subsection by recalling the well-known fact that profinite completions commute with direct products.

\begin{fact}
 Let $G$ and $H$ be groups. Then $\widehat{G \times H} \cong \widehat{G} \times \widehat{H}$.
\end{fact}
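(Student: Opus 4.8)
The plan is to work directly with the inverse-limit definition of the profinite completion and to exhibit a cofinal family of finite-index normal subgroups of $G \times H$ that splits as a product. First I would observe that a product subgroup $M \times N$, with $M$ a finite-index normal subgroup of $G$ and $N$ a finite-index normal subgroup of $H$, is itself finite-index and normal in $G \times H$, with $[G\times H : M\times N]=[G:M][H:N]<\infty$, and that the quotient splits as $(G \times H)/(M \times N) \cong (G/M) \times (H/N)$.

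The crucial step is to show that the family $\{M \times N\}$ is cofinal in the directed poset $\mathcal{N}(G \times H)$ (ordered by reverse inclusion as in Subsection~\ref{Sebsec-PP}). Given any $K \in \mathcal{N}(G \times H)$, I would set $M \coloneqq \{g \in G : (g,1) \in K\}$ and $N \coloneqq \{h \in H : (1,h) \in K\}$; these are finite-index normal subgroups of $G$ and $H$ respectively, being the kernels of the composites $G \hookrightarrow G\times H \to (G\times H)/K$ and $H \hookrightarrow G\times H \to (G\times H)/K$ into a finite group. By construction $M \times N \subseteq K$, since $(m,n)=(m,1)(1,n)$, so $M \times N$ dominates $K$ in the reverse-inclusion order. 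This is the heart of the argument and the place where one must be slightly careful, since a general finite-index normal subgroup of $G \times H$ need not itself split as a product.

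Once cofinality is established, I would invoke the standard fact that the inverse limit over a directed poset is unchanged upon restricting to a cofinal subsystem. Thus
$$ \widehat{G \times H} = \varprojlim_{K} (G\times H)/K \;\cong\; \varprojlim_{M,N} (G/M)\times(H/N) \;\cong\; \Bigl(\varprojlim_M G/M\Bigr) \times \Bigl(\varprojlim_N H/N\Bigr) = \widehat G \times \widehat H, $$
where the middle isomorphism uses that the limit over the product poset of a product of inverse systems is the product of the limits. Checking that these identifications are compatible with the canonical maps from $G \times H$ is routine.

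Alternatively, and more conceptually, one could verify that $\widehat G \times \widehat H$ together with the canonical map $G \times H \to \widehat G \times \widehat H$ satisfies the universal property of the profinite completion: any homomorphism from $G \times H$ to a profinite group $P$ restricts to homomorphisms on the two factors, which extend uniquely and continuously to $\widehat G$ and $\widehat H$; the only subtlety is to check that the two extended images still commute in $P$, which follows because the centralizer of a subset is a closed subgroup and the factors commute already on the dense image. I expect the cofinality verification to be the main obstacle in the first approach, and the commuting-closures argument to be the corresponding subtle point in the second.
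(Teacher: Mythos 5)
Your proof is correct on both counts; note that the paper itself records this statement as a well-known Fact and supplies no proof at all, so there is no argument of the paper to compare against. Your first approach is the standard one: the key cofinality step is exactly right --- given $K \in \mathcal{N}(G \times H)$, the subgroups $M = \ker\bigl(G \to (G\times H)/K\bigr)$ and $N = \ker\bigl(H \to (G\times H)/K\bigr)$ satisfy $M \times N \subseteq K$, so the product subgroups are cofinal for the reverse-inclusion order used in Subsection~\ref{Sebsec-PP}, and you correctly flag that a general finite-index normal subgroup of $G \times H$ need not itself be a product. The passage to the cofinal subsystem and the identification $\varprojlim_{(M,N)} (G/M)\times(H/N) \cong \widehat{G}\times\widehat{H}$ over the product poset are both standard and stated accurately. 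Your alternative universal-property argument is also sound: any abstract homomorphism $G\times H \to P$ with $P$ profinite extends continuously factorwise, the images $\widehat{f_1}(\widehat{G})$ and $\widehat{f_2}(\widehat{H})$ lie in the closures of $f_1(G)$ and $f_2(H)$, and since centralizers are closed these closures commute elementwise, so $(x,y)\mapsto \widehat{f_1}(x)\widehat{f_2}(y)$ is a homomorphism; uniqueness follows from density of the image of $G \times H$ in $\widehat{G}\times\widehat{H}$. Either route is a complete proof of the Fact.
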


\subsection{Fiber products}
 In this subsection, we briefly give the definition of fiber product and introduce an interesting result obtained by Bridson \cite[Proposition 1.2]{MR4732948}.
 
\begin{defn}\label{def:fiberproduct}
 Let $p_1 \colon G_1 \to Q$, $p_2\colon G_2 \to Q$ be two epimorphisms between groups. The \emph{fiber product} of $p_1$ and $p_2$ is the subgroup given by
 $$ P\coloneqq \{ (g_1,g_2) : p_1(g_1)=p_2(g_2) \} < G_1 \times G_2 . $$
\end{defn}

 The following proposition is a key tool of our result and can be found in \cite[Proposition 1.2]{MR4732948}.
 The original statement and its proof can be found in \cite[Theorem 2.2]{MR3531665}, and this type of proposition was first suggested by Platonov and Tavgen in \cite{platonov1986grothendieck}.
 It has also played an important role in providing an example of Grothendieck pairs. See \cite{MR2119723}.

\begin{prop} \label{Fiber_profinite_completion}
Let $p_1 : G_1 \to Q$, $p_2 : G_2 \to Q$ be two epimorphisms of groups. If $G_1, G_2$ are finitely generated, $Q$ is finitely presented, $\widehat{Q}=1$, and $H_2(Q,\ZZ)=0$, then the inclusion $P \hookrightarrow G_1 \times G_2 $ induces an isomorphism of profinite completions $\widehat{P} \cong \widehat{G_1} \times \widehat{G_2}.$
\end{prop}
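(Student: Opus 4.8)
The plan is to study the inclusion $u : P \hookrightarrow G_1 \times G_2$ and show that the induced map on profinite completions is an isomorphism; by the Fact that $\widehat{G_1 \times G_2} \cong \widehat{G_1} \times \widehat{G_2}$, this gives the proposition. Write $N_i = \ker p_i \trianglelefteq G_i$. The entire argument reduces to one key statement, which I would isolate first: \emph{the inclusions $N_i \hookrightarrow G_i$ induce isomorphisms $\widehat{N_i} \xrightarrow{\sim} \widehat{G_i}$.} Granting this, I would run a short diagram chase. Observe that $N_1 \times N_2 \le P \le G_1 \times G_2$ (indeed, if $(g_1,g_2)\in P$ with $g_1 \in N_1$, then $p_2(g_2)=p_1(g_1)=1$, so $g_2 \in N_2$). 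Functoriality then produces
$$ \widehat{N_1 \times N_2} \xrightarrow{\;a\;} \widehat{P} \xrightarrow{\;b\;} \widehat{G_1} \times \widehat{G_2}, $$
whose composite is the product $\widehat{N_1}\times\widehat{N_2} \to \widehat{G_1}\times\widehat{G_2}$ of the two isomorphisms above, hence is itself an isomorphism. In particular $a$ is injective and $b$ (which is essentially $\widehat{u}$) is surjective.

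Next I would show that $a$ is \emph{surjective}, so that $a$ becomes an isomorphism and therefore $b = (b\circ a)\circ a^{-1}$ is an isomorphism, finishing the proof. For this it suffices that $N_1\times N_2$ is dense in $\widehat{P}$. But $P/(N_1\times N_2)\cong Q$ via $(g_1,g_2)\mapsto p_1(g_1)=p_2(g_2)$, so a finite quotient of $P$ that kills $N_1\times N_2$ is exactly a finite quotient of $Q$; since $\widehat{Q}=1$ the only such quotient is trivial, whence $N_1\times N_2$ is dense. As $\widehat{N_1\times N_2}$ is compact, $a$ has closed dense image, so $a$ is onto.

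The same density principle handles the surjectivity half of the key statement: a finite quotient of $G_i$ that kills $N_i$ factors through $Q$, hence is trivial, so $N_i$ is dense in $\widehat{G_i}$ and $\widehat{N_i}\to\widehat{G_i}$ is onto. The hard part — and the only place the hypotheses that $Q$ is finitely presented and $H_2(Q;\ZZ)=0$ are used — is the \emph{injectivity} of $\widehat{N_i}\to\widehat{G_i}$, equivalently the claim that the profinite topology of $G_i$ induces the full profinite topology on $N_i$. I would establish this through the homological mechanism attached to the extension $1 \to N_i \to G_i \to Q \to 1$: fixing a finite presentation $1\to R \to F \to Q \to 1$ and comparing the profinite completion of the extension with its abstract counterpart (via the five-term exact sequence and the Hopf-formula description $H_2(Q;\ZZ)\cong (R\cap[F,F])/[F,R]$), one shows that $\ker(\widehat{N_i}\to\widehat{G_i})$ is a continuous quotient of a profinite homology group built from $H_2(Q;\ZZ)$. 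Finite presentability keeps the relevant groups finitely generated so that this comparison is well-behaved, and $H_2(Q;\ZZ)=0$ forces the kernel to vanish.

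I expect this homological control of $\ker(\widehat{N_i}\to\widehat{G_i})$ to be the main obstacle; everything else is the formal density and diagram-chase bookkeeping above. This is precisely the content isolated by Platonov–Tavgen and carried out in \cite{MR3531665} and \cite{MR4732948}, whose argument I would follow and adapt to the present (asymmetric) pair of epimorphisms $p_1,p_2$.
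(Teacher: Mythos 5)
Your overall architecture (reduce to the map $\widehat{N_1\times N_2}\to\widehat{P}\to\widehat{G_1}\times\widehat{G_2}$, use density arguments for surjectivity) contains several correct pieces --- the identifications $N_1\times N_2\le P$, $P/(N_1\times N_2)\cong Q$, and both density arguments (any finite quotient of $P$ killing $N_1\times N_2$, or of $G_i$ killing $N_i$, factors through $Q$ and hence is trivial) are all fine. But your key statement, that the inclusions $N_i\hookrightarrow G_i$ induce \emph{isomorphisms} $\widehat{N_i}\to\widehat{G_i}$, is false, and it fails precisely in the situation where this paper applies the proposition. Injectivity of $\widehat{N}\to\widehat{G}$ is, as you say, equivalent to the profinite topology of $G$ inducing the full profinite topology on $N$, i.e.\ every finite-index subgroup $M\le N$ contains $N\cap U$ for some finite-index $U\le G$. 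When $Q$ is infinite and $G_2=F_n$, the kernel $N_2=\ker(p_2)$ is a nontrivial normal subgroup of infinite index in a free group, hence free of infinite rank. Such an $N$ has uncountably many index-$2$ subgroups (kernels of the uncountably many surjections $F_\infty\to\ZZ/2\ZZ$ determined by subsets of a basis), whereas the finitely generated group $G$ has only countably many finite-index subgroups $U$, and each finite-index $W=N\cap U$ lies in only finitely many index-$2$ subgroups of $N$ (those containing the finite-index normal closure $W^N$). So countably many candidates cannot be cofinal, and $\widehat{N_2}\to\widehat{G_2}$ is never injective here; no amount of homological input ($Q$ finitely presented, $H_2(Q;\ZZ)=0$) can rescue a statement that fails for cardinality reasons. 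Consequently your diagram chase collapses: with the composite $b\circ a$ only surjective (which your density arguments do give), surjectivity of $a$ yields nothing about injectivity of $b$, which is the actual crux.

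The correct argument --- the one in the sources the paper itself cites for this proposition ([MR3531665, Theorem 2.2], [MR4732948, Proposition 1.2], going back to Platonov--Tavgen; the paper gives no independent proof --- proves injectivity of $\widehat{P}\to\widehat{G_1}\times\widehat{G_2}$ \emph{directly}, and the crucial difference is that it never needs to control arbitrary finite-index subgroups of $N_i$: given a finite-index normal $\Lambda\le P$, one sets $M_i=N_i\cap\Lambda$, and these particular subgroups are automatically normalized by all of $G_i$, because $N_i\times 1$ and $1\times N_2$ are normal in $P$ and $P$ projects onto each $G_i$ (so conjugation by any $g_1\in G_1$ on $N_1$ is realized by conjugation by some $(g_1,g_2)\in P$). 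One then applies your homological mechanism to the extension $1\to N_i/M_i\to G_i/M_i\to Q\to 1$ with \emph{finite} kernel: $\widehat{Q}=1$ kills the outer action and forces $H_1(Q)=0$, and then $H^2(Q;Z)\cong\Hom(H_2(Q),Z)\oplus\mathrm{Ext}(H_1(Q),Z)=0$ splits the resulting central extension, producing a finite-index $U_i\le G_i$ with $U_i\cap N_i\le M_i$, from which one assembles a finite-index subgroup of $G_1\times G_2$ meeting $P$ inside $\Lambda$. So your instinct about where $H_2(Q;\ZZ)=0$ and finite presentability enter is right, but they must be deployed at the level of $P$-invariant kernels $N_i\cap\Lambda$, not to the false claim $\widehat{N_i}\cong\widehat{G_i}$.
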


\subsection{Reidemeister--Schreier method}
 We recall the Reidemeister--Schreier method in this subsection.
For a given group presentation $G=\left< S \ | \ R \right>$ and a subgroup $H$, this method provides us a group presentation for $H$.
 For more details on the method, we refer to \cite[Chapter II.4]{MR577064}. We adopt the notation in \cite[p.5]{casals2024presentation}.

\begin{defn}[Reidemeister rewriting process]
 Let $G$ be a group generated by $ s_1 , \cdots , s_k $, $H$ be a subgroup of $G$, and $\overline{{}\cdot{}} \colon w \to \overline{w}$ be a right coset representative function for $G$ mod $H$.
 Suppose that a word $w \in H$ is expressed, as a generator for $G$, as 
 $$ w = s_{i_1}^{\epsilon_1} \cdots s_{i_l}^{\epsilon_l} $$
 where $\epsilon_i = \pm 1$.
 Then $w$ can also be expressed as follows:
$$ w = (\overline{1} s_{i_1}^{\epsilon_1} (\overline{1 s_{i_1}^{\epsilon_1}})^{-1} )(\overline{ s_{i_1}^{\epsilon_1}} s_{i_2}^{\epsilon_2}( \overline{ s_{i_1}^{\epsilon_1} s_{i_2}^{\epsilon_2}})^{-1})
(\overline{  s_{i_1}^{\epsilon_1}  s_{i_2}^{\epsilon_2} }  s_{i_3}^{\epsilon_3} (\overline{ s_{i_1}^{\epsilon_1}  s_{i_2}^{\epsilon_2} s_{i_3}^{\epsilon_3}} )^{-1}) 
\cdots ( \overline{s_{i_1}^{\epsilon_1} \cdots s_{i_{l-1}}^{\epsilon_{l-1}}} s_{i_l}^{\epsilon_l} (\overline{ s_{i_1}^{\epsilon_1} \cdots s_{i_l}^{\epsilon_l} })^{-1}) $$
 This expression of $w$ is called the \emph{Reidemeister rewriting process}, and we denote this expression by $\tau(w)$.
\end{defn}

\begin{thm}[Reidemeister--Schreier method] \label{RS method}
 Let $G=\left< S \ | \ R \right>$ be a group, and $H$ be a subgroup of $G$.
 Fix a set of right coset representatives $T$ of $H$ in $G$, and let $\overline{{}\cdot{}} \colon w \to \overline{w}$ be a right coset representative function.
 Then a group presentation of $H$ is given by
$$ H = \left< ts (\overline{ts})^{-1} | \ ts (\overline{ts})^{-1} = \tau(ts (\overline{ts})^{-1}) , \ \tau(trt^{-1})=1, \ t \in T, \ s \in S , \ r \in R  \right>.$$
\end{thm}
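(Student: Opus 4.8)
The plan is to reduce everything to the free group and then quotient by the relators. Write $F = F(S)$ for the free group on $S$, let $N = \langle\langle R \rangle\rangle$ be the normal closure of $R$ in $F$, and let $\pi \colon F \twoheadrightarrow G = F/N$ be the quotient map. Set $K \coloneqq \pi^{-1}(H)$, so that $N \trianglelefteq K \leq F$ and $H \cong K/N$. Fixing the transversal $T$ (with $\overline{1} = 1$) for $H$ in $G$ simultaneously gives a transversal for $K$ in $F$, since $N \leq K$ makes the right cosets of $K$ in $F$ correspond bijectively to those of $H$ in $G$. Throughout I would treat each Schreier generator $ts(\overline{ts})^{-1}$ both as a genuine element of $K$ and as an abstract letter $y_{t,s}$, and regard $\tau$ as a rewriting that turns a word lying in $K$ into a word in the $y_{t,s}^{\pm 1}$.

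The first key step is a telescoping identity. Writing $t_j = \overline{s_{i_1}^{\epsilon_1}\cdots s_{i_j}^{\epsilon_j}}$, the $j$-th factor of $\tau(w)$ is $t_{j-1} s_{i_j}^{\epsilon_j} t_j^{-1}$, so substituting the actual group elements back for the $y_{t,s}$ telescopes the product to $\overline{1}\, w\, (\overline{w})^{-1}$. For $w \in K$ one has $\overline{w} = \overline{1} = 1$, whence this equals $w$. Two consequences follow: each factor is, up to inversion, a genuine Schreier generator, so $\tau$ really lands in the $y_{t,s}$; and the Schreier generators generate $K$, hence their images generate $H = K/N$. This telescoping also shows that the assignment $\phi \colon y_{t,s} \mapsto [\,ts(\overline{ts})^{-1}\,] \in H$ respects the stated relations — the relation $y_{t,s} = \tau(ts(\overline{ts})^{-1})$ because both sides evaluate to the same element of $K$, and $\tau(trt^{-1}) = 1$ because $trt^{-1} \in N$ maps to the identity in $H$ — so $\phi$ defines a surjection from the abstractly presented group onto $H$.

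The core of the argument, and the main obstacle, is injectivity of $\phi$, i.e.\ the absence of further relations. I would split this into the Nielsen--Schreier presentation of the free subgroup followed by the passage to the quotient. First, present $K \leq F$ itself: the Schreier generators generate $K$, and for a general transversal the only relations among them are the rewriting relations $y_{t,s} = \tau(ts(\overline{ts})^{-1})$ (for a Schreier transversal these degenerate to $y_{t,s} = 1$ whenever $ts \in T$, recovering a free basis and no relations). Establishing this free-subgroup presentation is the delicate point; I would prove it by checking that Reidemeister rewriting induces a homomorphism inverse to $\phi$ at the free level, the well-definedness of which is exactly the content of the rewriting relations. Second, I would pass from $K$ to $H = K/N$: since $F = \bigsqcup_{t\in T} Kt$, any conjugate $f r f^{-1}$ with $f \in F$ and $r \in R$ can be written as $k\,(trt^{-1})\,k^{-1}$ with $k \in K$ and $t = \overline{f}$, so $N$ is generated as a normal subgroup of $K$ by $\{\,trt^{-1} : t \in T,\ r \in R\,\}$. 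Adjoining these to the presentation of $K$ and rewriting each $trt^{-1}$ through $\tau$ produces precisely the relators $\tau(trt^{-1}) = 1$. Combining the two steps yields the claimed presentation of $H = K/N$, completing the proof.
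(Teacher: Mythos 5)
Your proof is correct, and it is essentially the paper's own route in the only sense available: the paper gives no proof of Theorem~\ref{RS method}, citing Chapter~II.4 of \cite{MR577064}, and your two-step reduction --- first presenting $K=\pi^{-1}(H)$ by the Schreier generators modulo the rewriting relations (via the telescoping identity $\phi(\tau(w))=w$ together with the homomorphism $w\mapsto\tau(w)$ inverse to $\phi$), then observing that $N=\langle\langle R\rangle\rangle$ is normally generated in $K$ by the elements $trt^{-1}$ with $t=\overline{f}$, $k=ft^{-1}\in K$ --- is exactly the classical argument found there (and as Theorem~2.6 in Magnus--Karrass--Solitar). Two small points of bookkeeping: the normalization $\overline{1}=1$, which you correctly impose, is genuinely needed (otherwise telescoping yields $\overline{1}\,w\,\overline{1}^{-1}$ rather than $w$); and well-definedness of $\tau$ on elements of $K$ actually follows from cancellation-invariance alone, since inserting $ss^{-1}$ after a prefix $u$ creates the two mutually inverse letters $y_{\overline{u},s}$ and $y_{\overline{u},s}^{-1}$, whereas the rewriting relations $y_{t,s}=\tau\bigl(ts(\overline{ts})^{-1}\bigr)$ are what is needed to check $\tau\circ\phi=\mathrm{id}$ on the abstract generators --- a slight relocation of the role you assigned them, not a gap.
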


%%%%%%%%%%%%%%%%%%%%%%%%%%%%%%%%%%%%%%%%%%%%%%%%%%%%%%%%%%%%%%%%%%%%%%

\section{The construction of bi-orderable fiber products} \label{sec:proof1}
 We devote this section to proving the following result.

\begin{thm} \label{Thm-BO}
 Let $G$ be a non-trivial finitely generated left-orderable group. Say $G$ is generated by $n$ elements, and consider the following two epimorphisms 
 $$ \pi_1 : F_n*G \twoheadrightarrow G , \qquad \pi_2 : F_n \twoheadrightarrow G $$ where $\pi_2$ is the natural epimorphism, $\pi_1 |_{F_n} = \pi_2$, and $\pi_1|_{G}=\Id_G$, the identity map on $G$.
 Then the corresponding fiber product $P$ is bi-orderable.
\end{thm}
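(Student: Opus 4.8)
The plan is to realize $P$ as a semidirect product and then apply the bi-ordering criterion of Lemma~\ref{BO-ses}. Write $F_n=\langle x_1,\dots,x_n\rangle$ and let $a_i=\pi_2(x_i)$ be the corresponding generators of $G$, so that $\pi_1(x_i)=\pi_1(a_i)=a_i$ and $\pi_1|_G=\Id_G$. The projection to the second factor $\mathrm{pr}_2\colon P\to F_n$ is onto and splits: the map $s\colon F_n\to P$, $w\mapsto(w,w)$ (reading $w$ inside $F_n*G$ via the canonical inclusion) is a section, since $\pi_1(w)=\pi_2(w)$. Its kernel is $N\times\{1\}$ with $N=\ker\pi_1\trianglelefteq F_n*G$, so $P\cong N\rtimes s(F_n)$ and we obtain a short exact sequence $1\to N\to P\to F_n\to 1$. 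As $F_n$ is bi-orderable via the Magnus ordering, it remains only to produce a bi-ordering of $N$ whose positive cone is invariant under conjugation by all of $P$.

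First I would pin down the structure of $N$. Applying the Reidemeister--Schreier method (Theorem~\ref{RS method}) to $F_n*G$ with the right transversal $T=G$ (each $g\in G\le F_n*G$ represents $N g$, since $\pi_1$ separates these cosets), the letters $a_i$ yield trivial Schreier generators, while the $x_i$ yield $y_{g,i}:=g\,x_i a_i^{-1}\,g^{-1}$ for $g\in G$ and $1\le i\le n$; every relator of $G$ is a word in the $a_i$ and so rewrites trivially. Hence $N$ is free on $\mathcal B=\{y_{g,i}:g\in G,\ 1\le i\le n\}$ (equivalently, $N$ is free by the Kurosh subgroup theorem, since $N$ meets every conjugate of $G$ trivially and meets every conjugate of $F_n$ in a conjugate of $\ker\pi_2$). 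Being free, $N$ carries a Magnus bi-ordering. The crucial computation is that $G$ acts on this basis by its left-regular representation: directly from the formula, $a_j\,y_{g,i}\,a_j^{-1}=y_{a_j g,\,i}$. Writing $x_j=y_{1,j}\,a_j$, conjugation by a generator of $F_n$ is thus the composite of an \emph{inner} automorphism of $N$ (by $y_{1,j}\in N$) with the basis permutation induced by $a_j\in G$; consequently conjugation by any element of $P=N\cdot s(F_n)$ decomposes as a composite of inner automorphisms of $N$ and basis permutations of the form $y_{g,i}\mapsto y_{hg,i}$ with $h\in G$.

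The main point, and the one place where left-orderability of $G$ is used, is to choose the Magnus ordering so that these basis permutations preserve its positive cone. Here I would fix a left-invariant total order $\prec$ on $G$ (available precisely because $G$ is left-orderable) and order the basis $\mathcal B\cong G\times\{1,\dots,n\}$ lexicographically, using $\prec$ on the first coordinate. Then each permutation $y_{g,i}\mapsto y_{hg,i}$ preserves the order on $\mathcal B$, because left multiplication by $h$ preserves $\prec$. An order-preserving permutation of the basis induces an order-preserving relabeling of the non-commuting variables, hence an order-preserving ring automorphism of $\ZZ[[X_n]]$ compatible with the induced automorphism of $N$ under the Magnus embedding; therefore it preserves the Magnus ordering, and in particular the positive cone $P_N$. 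Inner automorphisms of $N$ preserve $P_N$ automatically, since the Magnus ordering is a bi-ordering. Combining the two, $g\,i(P_N)\,g^{-1}\subset i(P_N)$ for every $g\in P$, and Lemma~\ref{BO-ses} gives that $P$ is bi-orderable.

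I expect the main obstacle to be the compatibility invoked in the last paragraph: confirming that the $F_n$-conjugation action genuinely decomposes into inner automorphisms together with the single left-regular basis action, and that a left-invariant order on $G$ produces a Magnus ordering invariant under that action. The identity $a_j\,y_{g,i}\,a_j^{-1}=y_{a_j g,\,i}$ and the relation $x_j=y_{1,j}a_j$ make the decomposition transparent; the residual subtlety is internal to the Magnus construction, namely that relabeling the variables by an order-preserving bijection preserves the induced (a priori only partial) order on $\ZZ[[X_n]]$ when restricted to the image of $N$, and this is where I would spend the most care.
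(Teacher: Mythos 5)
Your proposal is correct and follows essentially the same route as the paper: the same Reidemeister--Schreier computation showing $\ker\pi_1$ is free on the conjugates $g s_i \widetilde{s_i}^{-1} g^{-1}$, the same splitting $P\cong F_\infty\rtimes F_n$, and the same left-order-indexed lexicographic Magnus bi-ordering whose positive cone is checked to be conjugation-invariant before applying Lemma~\ref{BO-ses}. Your identity $a_j\,y_{g,i}\,a_j^{-1}=y_{a_jg,i}$ together with $x_j=y_{1,j}a_j$ is exactly the content of the paper's Lemma~\ref{Calculation_s_x}, and your order-preserving-relabeling argument is the abstract form of the paper's leading-term coefficient computation.
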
 

 To establish this theorem, we need several lemmata. From now on, let $G,n,\pi_1,\pi_2$, and $P$ be as in Theorem~\ref{Thm-BO}.
 We also identify the elements of $F_n$ and $G$ with their canonical images in the free product $F_n * G$.
 We fix a free generating set $\{ s_i : 1 \leq i \leq n \}$ for $F_n$, and let $\widetilde{s_i}=\pi_2(s_i)$.
 Recall that the elements $\widetilde{s_i}$ form a generating set for $G$.

\begin{lem} \label{lem-free_generating}
$\ker(\pi_1)$ is freely generated by $S \coloneqq \{ g s_i \widetilde{s_i}^{-1} g^{-1} : g \in G , \ 1 \leq i \leq n \}$, thus $\ker(\pi_1)$ is isomorphic to $F_{\infty}$.
\end{lem}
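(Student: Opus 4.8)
The plan is to read off a presentation of $\ker(\pi_1)$ directly from the Reidemeister--Schreier method (Theorem~\ref{RS method}), exploiting the fact that the copy of $G$ sitting inside $F_n*G$ is an extremely convenient set of coset representatives. First I would fix \emph{any} presentation $G=\langle \widetilde{s_1},\dots,\widetilde{s_n}\mid R\rangle$ (no finiteness of $R$ is needed), so that $F_n*G=\langle s_1,\dots,s_n,\widetilde{s_1},\dots,\widetilde{s_n}\mid R\rangle$, where the $s_i$ are free and the relators $R$ involve only the $\widetilde{s_i}$. The key observation is that since $\pi_1|_G=\Id_G$ and $\pi_1$ is surjective, the subgroup $G<F_n*G$ maps bijectively onto $(F_n*G)/\ker(\pi_1)\cong G$; hence $T\coloneqq G$ is a transversal for $\ker(\pi_1)$, with coset representative function given on group elements by $\overline{w}=\pi_1(w)$. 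Choosing the representatives inside $G$ to be Schreier words in the $\widetilde{s_i}$ (via a spanning tree of the Cayley graph of $G$) makes $T$ a Schreier transversal for $\ker(\pi_1)$ in $F_n*G$, because no letter $s_i$ ever occurs in a transversal word, so every prefix of a transversal word again lies in $T$.

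Next I would run through the generators $ts(\overline{ts})^{-1}$. For $s=\widetilde{s_i}$ and $t=g\in G$ we have $g\widetilde{s_i}\in G=T$, so $\overline{g\widetilde{s_i}}=g\widetilde{s_i}$ and the generator is trivial; these are dropped. For $s=s_i$ and $t=g\in G$ we compute $\overline{g s_i}=\pi_1(g s_i)=g\widetilde{s_i}$, whence the generator equals $g s_i(g\widetilde{s_i})^{-1}=g s_i\widetilde{s_i}^{-1}g^{-1}$, which is precisely an element of $S$; moreover $g s_i\notin T$, so these are the genuine free generators, and distinct pairs $(g,i)$ yield distinct basis elements. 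Because $T$ is Schreier, the relations $ts(\overline{ts})^{-1}=\tau(ts(\overline{ts})^{-1})$ are trivially satisfied, so only the relators $\tau(trt^{-1})$ with $t\in T$, $r\in R$ remain. Here is where I would argue they all vanish: since both $t$ and $r$ are words in the $\widetilde{s_i}$ alone, every factor occurring in the rewriting of $trt^{-1}$ has the form $t'\widetilde{s_j}^{\pm 1}(\overline{t'\widetilde{s_j}^{\pm 1}})^{-1}$ with $t'\in T\cap G=G$, i.e.\ a dropped (trivial) generator. Thus each $\tau(trt^{-1})$ is the empty word, no relations survive, and $\ker(\pi_1)$ is free on $S$.

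Finally, since $G$ is nontrivial and left-orderable it is torsion-free, hence infinite, while being finitely generated it is countable; therefore $|S|=n\,|G|$ is countably infinite and $\ker(\pi_1)\cong F_\infty$. The only real technical burden — and the step I expect to require the most care — is the bookkeeping at the end: verifying that the chosen $T$ is genuinely Schreier and that the rewriting of each $\tau(trt^{-1})$ collapses entirely into the trivial $G$-type generators. Once that is secured, both the freeness and the identification of the free basis with $S$ are immediate. As a conceptual sanity check, Bass--Serre theory gives the same outcome: $\ker(\pi_1)$ acts on the Bass--Serre tree of the splitting $F_n*G$ with vertex stabilizers that are trivial at the $G$-vertices (as $\pi_1|_G$ is injective) and free at the $F_n$-vertices (being subgroups of $F_n$), and with trivial edge groups, so it is free.
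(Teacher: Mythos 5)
Your proposal is correct and follows essentially the same route as the paper: both apply the Reidemeister--Schreier method with the copy of $G$ inside $F_n*G$ as the transversal (so that $\overline{w}=\pi_1(w)$), identify the nontrivial generators as the elements $g s_i \widetilde{s_i}^{-1} g^{-1}$ of $S$, and eliminate the relators $\tau(trt^{-1})$ together with the trivial $G$-type generators $g\widetilde{s_i}(\overline{g\widetilde{s_i}})^{-1}$. The only cosmetic differences are that you dispatch the relations $ts(\overline{ts})^{-1}=\tau(ts(\overline{ts})^{-1})$ by invoking the Schreier (spanning-tree) property where the paper instead computes $\tau(y_{s_i,g})=y_{s_i,g}$ directly, and that you supply two details the paper leaves implicit (that $S$ is countably infinite because a nontrivial left-orderable group is infinite, plus a Bass--Serre sanity check of freeness).
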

\begin{proof}
 We use the Reidemeister--Schreier method (Theorem~\ref{RS method}) to show that $\ker(\pi_1)$ is freely generated by $$ S=\{ g s_i \widetilde{s_i}^{-1} g^{-1} : g \in G , \ 1 \leq i \leq n \}. $$
 First, we compute the group presentation for $\ker(\pi_1)$.
 To apply the method, we take $G$ as a right transverse set for $\ker(\pi_1)\trianglelefteq F_n * G$. 
 % This is possible since $\pi_1$ maps onto $G$ and sends a right coset of $\ker(\pi_1)$ in $F_n * G$ to its coset representative. 
%This is possible since $(F_n*G) / \ker (\pi_1) \cong G$ so $\ker(\pi_1)g \mapsto g$.
This is possible since $\pi_1$ maps onto $G$ and sends the right coset $\ker(\pi_1)g$ to $g$ for all $g \in G$ (i.e., $\ker(\pi_1)g \mapsto g$).
This also means that the right coset representative function $\overline{{}\cdot{}}\coloneqq F_n * G\to G$ is the same as the map $\pi_1$.
 
 Note that $F_n * G$ is generated by $\{s_1,\ldots,s_n,\widetilde{s_1},\ldots,\widetilde{s_n}\}$ and $\pi_1(s_i)=\widetilde{s_i}$. 
 So, the new generating set for $\ker(\pi_1)$ is given by
 $$ \left \{ y_{s_i,g} \coloneqq gs_i (\overline{gs_i})^{-1}= g s_i (g\widetilde{s_i})^{-1} , \ y_{\widetilde{s_i},g} \coloneqq g\widetilde{s_i} (\overline{g\widetilde{s_i}})^{-1}= g \widetilde{s_i} (g\widetilde{s_i})^{-1} : g \in G , \ 1 \leq i \leq n \right \}. $$
 Notice that $y_{s_i,g} = g s_i \widetilde{s_i}^{-1} g^{-1}$ and $y_{\widetilde{s_i},g}=1$.
 Thus, $y_{\widetilde{s_i},g}$ are redundant generators and we conclude that $\ker(\pi_1)$ is generated by 
 $\{ g s_i \widetilde{s_i}^{-1} g^{-1} : g \in G , \ 1 \leq i \leq n \}.$

 Next, we investigate the relations for $\ker(\pi_1)$.
 In the Reidemeister--Schreier method, there are two types of relations, so first we consider the relations of the form $ts(\overline{ts})^{-1} = \tau \left( \ ts(\overline{ts})^{-1} \ \right).$
 Recall that $ts(\overline{ts})^{-1}$ is the generator in the Reidemeister--Schreier method.
 So, we consider only $y_{s_i,g} = \tau(y_{s_i,g})$ since $y_{\widetilde{s_i},g}=1$.
 Let us write $g=\widetilde{s_{i_1}}^{\varepsilon_1}\widetilde{s_{i_2}}^{\varepsilon_2}\cdots \widetilde{s_{i_r}}^{\varepsilon_r}$ where $\varepsilon_i=\pm 1$. 
Then \begin{align*}
 y_{s_i,g} &= g s_i \widetilde{s_i}^{-1} g^{-1} \\
 &= \widetilde{s_{i_1}}^{\varepsilon_1}\widetilde{s_{i_2}}^{\varepsilon_2}\cdots \widetilde{s_{i_r}}^{\varepsilon_r} s_i \widetilde{s_i}^{-1} \widetilde{s_{i_r}}^{-\varepsilon_r}\cdots\widetilde{s_{i_2}}^{-\varepsilon_2}\widetilde{s_{i_1}}^{-\varepsilon_1}
\end{align*}
Now we compute $\tau(y_{s_i,g})$. To find this easily, we calculate the following formulae.
\begin{enumerate}
    \item $\alpha_k \coloneqq \overline{ \widetilde{s_{i_1}}^{\varepsilon_1}\widetilde{s_{i_2}}^{\varepsilon_2}\cdots \widetilde{s_{i_{k-1}}}^{\varepsilon_{k-1}}} \widetilde{s_{i_k}}^{\varepsilon_k} \left(\overline{ \widetilde{s_{i_1}}^{\varepsilon_1}\widetilde{s_{i_2}}^{\varepsilon_2}\cdots \widetilde{s_{i_{k-1}}}^{\varepsilon_{k-1}} \widetilde{s_{i_k}}^{\varepsilon_k}}\right) ^{-1},$ ($1 \leq k \leq r $) \\
    Since $\overline{\widetilde{s_{i}}^{\varepsilon_1}} = \widetilde{s_{i}}^{\varepsilon_1}$, $\alpha_k = 1$ for all $1 \leq k \leq r$.
    \item $\beta \coloneqq \overline{g}s_i \left(\overline{g s_i}\right)^{-1}$ \\
    Note that $\overline{g}=g$ since $g \in G$. Thus, we have $\beta = g s_i \widetilde{s_i}^{-1} g^{-1} = y_{s_i,g}$.
    \item $\gamma \coloneqq \overline{gs_i}\widetilde{s_i}^{-1} \left( \overline{gs_i\widetilde{s_i}^{-1}} \right)^{-1}$ \\
    $\gamma = g \widetilde{s_i} \widetilde{s_i}^{-1} \left(g \widetilde{s_i} \widetilde{s_i}^{-1} \right)^{-1} = g g^{-1} = 1$.
    \item $\delta_j \coloneqq \overline{gs_i\widetilde{s_i}^{-1} \widetilde{s_{i_r}}^{-\varepsilon_r}\cdots\widetilde{s_{i_{j+1}}}^{-\varepsilon_{j+1}} } \widetilde{s_{i_j}}^{-\varepsilon_j} \left(\overline{gs_i\widetilde{s_i}^{-1} \widetilde{s_{i_r}}^{-\varepsilon_r}\cdots\widetilde{s_{i_{j}}}^{-\varepsilon_{j}}}\right)^{-1}$ ($1 \leq j \leq r $) \\
    For each $j$, $\delta_j = g \widetilde{s_i} \widetilde{s_i}^{-1}\widetilde{s_{i_r}}^{-\varepsilon_r}\cdots\widetilde{s_{i_{j}}}^{-\varepsilon_{j}} \left(g \widetilde{s_i} \widetilde{s_i}^{-1}\widetilde{s_{i_r}}^{-\varepsilon_r}\cdots\widetilde{s_{i_{j}}}^{-\varepsilon_{j}}\right)^{-1} = 1. $
\end{enumerate}

 By the Reidemeister rewriting process, we have $\tau(y_{s_i,g}) = \alpha_1 \cdots \alpha_r \beta \gamma \delta_r \cdots \delta_1=y_{s_i,g}$.
So, the relations of the type $ts(\overline{ts})^{-1} = \tau(ts(\overline{ts})^{-1})$ are redundant and disappear in the group presentation.
 
 Consider the relations of the form $\tau(trt^{-1})=1$ for $t\in G$ and $r\in R$ the relations in $G$. 
 For each $r \in R$, $trt^{-1}$ can be written as a product of $\widetilde{s_i}$'s and their inverses.
 As we observed that each $\widetilde{s_i}$ is converted into $y_{\widetilde{s_i},g}$ by the rewriting process, each $\tau(trt^{-1})$ is represented by a product of $y_{\widetilde{s_i},g}$'s for some $g$ and their inverses. This implies that one can eliminate the relations of type $\tau (trt^{-1})=1$ from the presentation for $\ker(\pi_1)$ by removing redundant generators of the form $y_{\widetilde{s_i},g}$. Therefore, the group presentation for $\ker(\pi_1)$ is given by 
 $$ \left < g s_i \widetilde{s_i}^{-1} g^{-1} \ (g \in G, \ 1 \leq i \leq n ) \ | \ \emptyset \right > ,$$ 
 and so $\ker(\pi_1)$ is isomorphic to $F_{\infty}$ freely generated by $S$.
\end{proof}

\begin{lem} \label{P-fbf}
 $ P \cong F_{\infty} \rtimes F_n$.
\end{lem}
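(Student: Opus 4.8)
The plan is to realize $P$ as a split extension by projecting onto the free factor $F_n$ and then invoking Lemma~\ref{lem-free_generating} to identify the kernel. Concretely, I would consider the restriction to $P$ of the projection $(F_n*G)\times F_n \to F_n$ onto the second coordinate, call it $\rho \colon P \to F_n$, $(g_1,g_2)\mapsto g_2$.

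First I would check that $\rho$ is surjective with kernel isomorphic to $\ker(\pi_1)$. Surjectivity is immediate: for $g_2\in F_n$, writing $\iota\colon F_n\hookrightarrow F_n*G$ for the canonical inclusion, the relation $\pi_1|_{F_n}=\pi_2$ gives $\pi_1(\iota(g_2))=\pi_2(g_2)$, so $(\iota(g_2),g_2)\in P$ maps onto $g_2$. For the kernel, an element $(g_1,g_2)\in P$ lies in $\ker\rho$ exactly when $g_2=1$, which forces $\pi_1(g_1)=\pi_2(1)=1$; hence $\ker\rho=\ker(\pi_1)\times\{1\}$, naturally identified with $\ker(\pi_1)$. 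This yields a short exact sequence
$$ 1 \to \ker(\pi_1) \to P \overset{\rho}{\to} F_n \to 1. $$
By Lemma~\ref{lem-free_generating} we have $\ker(\pi_1)\cong F_\infty$.

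The remaining point is that this extension splits. Here I would use the very same inclusion $\iota$ to define a section $\sigma\colon F_n \to P$ by $\sigma(g_2)=(\iota(g_2),g_2)$. This is a homomorphism because $\iota$ is, it lands in $P$ by the computation above, and $\rho\circ\sigma=\mathrm{id}_{F_n}$ since the second coordinate of $\sigma(g_2)$ is $g_2$. Therefore $\rho$ admits a splitting, and the short exact sequence exhibits $P$ as a semidirect product $\ker(\pi_1)\rtimes F_n \cong F_\infty\rtimes F_n$, as claimed.

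Since the bulk of the work---the explicit free generation of $\ker(\pi_1)$---has already been carried out in Lemma~\ref{lem-free_generating}, I do not expect a genuine obstacle in this step; the only care needed is to confirm that the obvious section is well-defined into the fiber product, which is precisely where the hypothesis $\pi_1|_{F_n}=\pi_2$ is used. I would also note that the conjugation action of $F_n$ on $F_\infty$ underlying this semidirect product is exactly the one that the Magnus-ordering argument in the proof of Theorem~\ref{Thm-BO} must later control, so it is worth recording the section $\sigma$ explicitly rather than merely asserting that the sequence splits.
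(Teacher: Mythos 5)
Your proposal is correct and follows essentially the same route as the paper: project $P$ onto the second factor, identify the kernel with $\ker(\pi_1)\cong F_\infty$ via Lemma~\ref{lem-free_generating}, and split the resulting short exact sequence. The only difference is that you write out the section $\sigma(g_2)=(\iota(g_2),g_2)$ explicitly, whereas the paper merely asserts the sequence is split --- a harmless (and arguably helpful) addition.
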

\begin{proof}
The proof is almost the same as the proof ($F_{\infty} \rtimes F_4$ part) of Theorem A in \cite{MR4732948}.
Recall that $\ker(\pi_1)$ is isomorphic to $F_{\infty}$ by Lemma \ref{lem-free_generating}.

The projection from $P$ to the second factor of $(F_n * G) \times F_n$ is surjective (recall that $\pi_1 |_{F_n} = \pi_2$.), so we have an epimorphism $f : P \twoheadrightarrow F_n$.
 Observe that $$\ker(f) = \{ (g_1,g_2) \in (F_n * G) \times F_n : \pi_1(g_1)=\pi_2(g_2), \ g_2=1 \}, \textnormal{ and }$$
 $$ \ker(\pi_1) = \{ g_1 \in F_n * G : \pi_1(g_1) = 1 \}.$$
Thus, these two subgroups are isomorphic, and the kernel of $f$ is $\ker(\pi_1) \cong F_{\infty}$. Hence, we obtain the following short exact sequence
$$ 1 \to F_{\infty} \hookrightarrow P \twoheadrightarrow F_n \to 1 $$ and the result immediately follows since this sequence is split.
\end{proof}

 For simplicity, let $x_{g,i}\coloneqq g s_i \widetilde{s_i}^{-1} g^{-1}$. Recall that $S=\{ x_{g,i} : g \in G , 1 \le i \leq n \}$ is a free generating set for $F_{\infty}$.

\begin{lem} \label{Calculation_s_x}
 In the expression $ P \cong F_{\infty} \rtimes F_n$, the homomorphism $F_n \to \Aut(F_{\infty})$ is given by $$ w \cdot (g s_j \widetilde{s_j}^{-1} g^{-1}) \coloneqq w (g s_j \widetilde{s_j}^{-1} g^{-1}) w^{-1}. $$
Then we have the following formulae.
\begin{enumerate}
    \item $ s_i \cdot x_{g,j} = x_{1,i} \ x_{\widetilde{s_i}g,j} \ x_{1,i}^{-1}$. 
    \item $ s_i \cdot x_{g,j}^{-1} = x_{1,i} \ x_{\widetilde{s_i}g,j}^{-1} \ x_{1,i}^{-1}$.
    \item $ s_i^{-1} \cdot x_{g,j} = x_{\widetilde{s_i}^{-1},i}^{-1} \ x_{\widetilde{s_i}^{-1}g,j} \  x_{\widetilde{s_i}^{-1},i}$.
    \item $ s_i^{-1} \cdot x_{g,j}^{-1} = x_{\widetilde{s_i}^{-1},i}^{-1} \ x_{\widetilde{s_i}^{-1}g,j}^{-1} \  x_{\widetilde{s_i}^{-1},i}$.
\end{enumerate}
\end{lem}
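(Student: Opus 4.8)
The plan is to reduce all four formulae to two elementary observations about the free product $F_n * G$ and then obtain (1)--(4) by direct substitution, deriving the even-numbered identities from the odd-numbered ones by inverting. Since the action $F_n \to \Aut(F_\infty)$ is by conjugation inside $F_n * G$, the entire lemma amounts to rewriting a conjugate of a free generator back in terms of the prescribed free generators $x_{h,k}$.

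First I would record the decomposition of each free generator of $F_n$ inside $F_n * G$. Since $x_{1,i} = s_i \widetilde{s_i}^{-1}$, we may write $s_i = x_{1,i}\,\widetilde{s_i}$ and hence $s_i^{-1} = \widetilde{s_i}^{-1}\,x_{1,i}^{-1}$. This splits the conjugating element $s_i^{\pm 1}$ into a part lying in the free kernel $F_\infty = \ker(\pi_1)$ (namely $x_{1,i}^{\pm 1}$) and a part lying in $G$ (namely $\widetilde{s_i}^{\pm 1}$). Second I would establish the \emph{index-shift identity}: for any $h \in G$, conjugation by $h$ sends free generators to free generators according to $h\,x_{g,j}\,h^{-1} = x_{hg,j}$. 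This is immediate from the definition $x_{g,j} = g s_j \widetilde{s_j}^{-1} g^{-1}$, since $h(g s_j \widetilde{s_j}^{-1} g^{-1})h^{-1} = (hg)s_j\widetilde{s_j}^{-1}(hg)^{-1}$. Because $F_\infty$ is normal in $F_n * G$ by Lemma~\ref{lem-free_generating}, conjugation by $h$ restricts to an automorphism of $F_\infty$, so it acts as a homomorphism on arbitrary products of the $x_{g,j}$; in particular it sends $x_{g,j}^{-1}$ to $x_{hg,j}^{-1}$.

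With these in hand, (1) follows from
\[ s_i \cdot x_{g,j} = s_i x_{g,j} s_i^{-1} = x_{1,i}\bigl(\widetilde{s_i}\,x_{g,j}\,\widetilde{s_i}^{-1}\bigr)x_{1,i}^{-1} = x_{1,i}\,x_{\widetilde{s_i}g,j}\,x_{1,i}^{-1}, \]
using the decomposition to peel off $x_{1,i}$ and then the index-shift with $h = \widetilde{s_i}$. For (3) I would use $s_i^{-1} = \widetilde{s_i}^{-1}x_{1,i}^{-1}$ to write $s_i^{-1} x_{g,j} s_i = \widetilde{s_i}^{-1}\bigl(x_{1,i}^{-1} x_{g,j} x_{1,i}\bigr)\widetilde{s_i}$ and then apply the index-shift with $h = \widetilde{s_i}^{-1}$ term by term, using $\widetilde{s_i}^{-1}x_{1,i}\widetilde{s_i} = x_{\widetilde{s_i}^{-1},i}$, which yields $x_{\widetilde{s_i}^{-1},i}^{-1}\,x_{\widetilde{s_i}^{-1}g,j}\,x_{\widetilde{s_i}^{-1},i}$. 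Finally (2) and (4) are obtained by inverting both sides of (1) and (3) respectively, since $s_i^{\pm1}\cdot x_{g,j}^{-1} = (s_i^{\pm1}\cdot x_{g,j})^{-1}$ because the action is by automorphisms.

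Since the whole argument is a short manipulation, there is no genuine obstacle; the only point requiring care is the bookkeeping of the $\widetilde{s_i}^{\pm 1}\widetilde{s_i}^{\mp 1}$ cancellations and making sure the output is expressed purely in the free generators $x_{h,k}$ of $F_\infty$ rather than as an unreduced word in $F_n * G$. The index-shift identity is exactly what guarantees that the conjugates land back among the prescribed free generators, so it is the conceptual crux of the lemma even though it is elementary.
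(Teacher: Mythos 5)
Your proposal is correct and follows essentially the same route as the paper's proof: the paper likewise inserts $\widetilde{s_i}^{\mp1}\widetilde{s_i}^{\pm1}$ to split $s_i^{\pm1}$ into its $\ker(\pi_1)$-part $x_{1,i}^{\pm1}$ and its $G$-part, regroups via the identity $h\,x_{g,j}\,h^{-1}=x_{hg,j}$ (including the computation $s_i^{-1}\widetilde{s_i}=x_{\widetilde{s_i}^{-1},i}^{-1}$, which is your $\widetilde{s_i}^{-1}x_{1,i}^{-1}\widetilde{s_i}$), and deduces (2) and (4) from (1) and (3) by inversion. Your only novelty is packaging these cancellations as a stated decomposition and an index-shift lemma, which is purely organizational.
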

\begin{proof}
We only check the first and third items. 
% The remaining cases follow immediately from an elementary property of the group action on groups.
The remaining cases follow immediately from $(s_i^\pm \cdot x_{g,j})(s_i^\pm \cdot x_{g,j}^{-1}) = 1$.
From the semidirect product structure, we have 
$$  s_i \cdot x_{g,j} = s_i (g s_j \widetilde{s_j}^{-1} g^{-1}) s_i^{-1}. $$
%However, the last expression is not of the form in the free generating set $S$.
%So we need to modify it as follows:
To see that the right-hand side is a product of elements in the free generating set $S$, we need to modify it as follows:
\begin{align*}
 s_i (g s_j \widetilde{s_j}^{-1} g^{-1}) s_i^{-1} &= s_i (\widetilde{s_i}^{-1} \widetilde{s_i}) (g s_j \widetilde{s_j}^{-1} g^{-1}) (\widetilde{s_i}^{-1} \widetilde{s_i}) s_i^{-1} \\
 &= (s_i \widetilde{s_i}^{-1}) (\widetilde{s_i} g) s_j \widetilde{s_j}^{-1} (g^{-1} \widetilde{s_i}^{-1}) (\widetilde{s_i} s_i^{-1}) \\
 &= (s_i \widetilde{s_i}^{-1}) (\widetilde{s_i} g) s_j \widetilde{s_j}^{-1} (\widetilde{s_i} g)^{-1} (s_i \widetilde{s_i}^{-1})^{-1}.
 \end{align*}
Obviously, $s_i \widetilde{s_i}^{-1} = x_{1,i}$ is an element in the free generating set. Since $\widetilde{s_i} g \in G$, $(\widetilde{s_i} g) s_j \widetilde{s_j}^{-1} (\widetilde{s_i} g)^{-1}$ is in the free generating set. Thus, we conclude that
$$ (s_i \widetilde{s_i}^{-1}) (\widetilde{s_i} g) s_j \widetilde{s_j}^{-1} (\widetilde{s_i} g)^{-1} (s_i \widetilde{s_i}^{-1})^{-1} = x_{1,i} \ x_{\widetilde{s_i}g,j} \ x_{1,i}^{-1} $$ is the desired description.

 Next, we check the third one. Recall that $$ s_i^{-1} \cdot x_{g,j} = s_i^{-1} (g s_j \widetilde{s_j}^{-1} g^{-1}) s_i.$$
 Similar to the previous case, we obtain 
\begin{align*}
 s_i^{-1} (g s_j \widetilde{s_j}^{-1} g^{-1}) s_i &= s_i^{-1} (\widetilde{s_i} \widetilde{s_i}^{-1}) (g s_j \widetilde{s_j}^{-1} g^{-1}) (\widetilde{s_i} \widetilde{s_i}^{-1}) s_i \\
 &= (s_i^{-1} \widetilde{s_i}) (\widetilde{s_i}^{-1} g) s_j \widetilde{s_j}^{-1} (g^{-1} \widetilde{s_i}) (\widetilde{s_i}^{-1} s_i) \\
 &= (s_i^{-1} \widetilde{s_i}) x_{\widetilde{s_i}^{-1} g,j} (s_i^{-1} \widetilde{s_i})^{-1}.
 \end{align*}
Note that $$ s_i^{-1} \widetilde{s_i} = (\widetilde{s_i}^{-1} \widetilde{s_i}) s_i^{-1} \widetilde{s_i} = \widetilde{s_i}^{-1} (s_i \widetilde{s_i}^{-1})^{-1} \widetilde{s_i} = x_{\widetilde{s_i}^{-1},i}^{-1}, $$
 so we obtain the third formula.
\end{proof}

 Now we prove the main goal of this section.

\begin{proof}[Proof of Theorem \ref{Thm-BO}]
 Recall the short exact sequence $$ 1 \to F_{\infty} \hookrightarrow P \twoheadrightarrow F_n \to 1. $$
 Since free groups $F_{\infty}$ and $F_n$ are bi-orderable, it suffices to construct a conjugation invariant positive cone $P_{F_{\infty}} \subset F_{\infty}$ such that $w \cdot P_{F_{\infty}} = P_{F_{\infty}}$ for $w \in F_n$. 
 Recall that the action of $F_n$ on $F_{\infty}$ is given by the conjugation, so $w\cdot P_{F_{\infty}} = w P_{F_{\infty}} w^{-1}$.
 First, we give an order on $S$ by declaring 
 $$ x_{g,i} = g s_i \widetilde{s_i}^{-1} g^{-1} > g' s_j \widetilde{s_j}^{-1} g'^{-1} = x_{g',j} $$
 if and only if $ g > g'$, or $i>j$ whenever $g=g'.$
 Now we expand it to a bi-order $\prec$ on $F_{\infty}$ using the Magnus ordering.
 Note that $ x_{g,i} \succ 1 $ for any $g \in G$ and $1 \leq i \leq n$.
 More precisely, let $\ZZ[[X_{g,i}]]$ be the ring of power series with non-commuting variables $X_{g,i}$ for $g \in G , 1 \leq i \leq n$. 
 Consider the map $i : F_{\infty} \to \ZZ[[X_{g,i}]]$ given by
 $$ x_{g,i} \mapsto 1+X_{g,i} \ , \qquad x_{g,i}^{-1} \mapsto 1-X_{g,i}+X_{g,i}^2-X_{g,i}^3+ \ldots, $$
 with a total ordering $$ X_{g,i} > X_{g',j} $$
 if and only if $ g > g'$, or $i>j$ whenever $g=g'.$
 We define a bi-order $x \succ y$ on $F_{\infty}$ if and only if $i(x) > i(y)$ in $\ZZ[[X_{g,i}]]$.
 This construction is clearly a bi-ordering on $F_{\infty}$. 
 By Lemma \ref{BO-ses}, the only remaining part to show is that $w \cdot P_{\prec} = P_{\prec}$ for $w \in F_n$ where $P_{ \prec }$ is the positive cone of the bi-ordering $\prec$ on $F_{\infty}$.

Choose $a = x_{g_1,i_1}^{m_1} \cdots x_{g_k,i_k}^{m_k} \in P_{\prec}$. 
% Then $1<i(a)$ in $\ZZ[[X_{g,i}]]$.
% In other words, the coefficient of the smallest nonzero term of $i(a)-1$ is positive.
% Say, the smallest nonzero term is $X_{h_1,j_1} \cdots X_{h_l,j_l}$ and its coefficient is positive.
Then $1<i(a)$ in $\ZZ[[X_{g,i}]]$, and the coefficient of the smallest nonzero term of $i(a)-1$ is positive.
Suppose that the smallest nonzero term of $i(a)-1$ is $X_{h_1,j_1} \cdots X_{h_l,j_l}$.
Then 
\begin{align} \label{toPowerSeries}
 i(a) = 1 + C X_{h_1,j_1} \cdots X_{h_l,j_l} + \cdots
\end{align}
where $C>0$.
It suffices to show that $s_i \cdot a \in P_{\prec}$ and $\ s_i^{-1} \cdot a \in P_{\prec}$.

\begin{enumerate}
    \item $s_i \cdot a \in P_{\prec}$ \\
    From the first two formulae in Lemma \ref{Calculation_s_x}, we have
    $$ s_i \cdot a = x_{1,i} x_{\widetilde{s_i}g_1,i_1}^{m_1} \cdots x_{\widetilde{s_i}g_k,i_k}^{m_k} x_{1,i}^{-1}. $$
    Since $\prec$ is a bi-order on $F_{\infty}$, observe that $s_i \cdot a \in P_{\prec}$ if and only if $$ x_{\widetilde{s_i}g_1,i_1}^{m_1} \cdots x_{\widetilde{s_i}g_k,i_k}^{m_k} \in P_{\prec}. $$
    Consider $i(x_{\widetilde{s_i}g_1,i_1}^{m_1} \cdots x_{\widetilde{s_i}g_k,i_k}^{m_k}) \in \ZZ[[X_{g,i}]]$. Since $G$ is left-orderable, $g_{i_1}<g_{i_2}$ implies $\widetilde{s_i}g_{i_1} < \widetilde{s_i}g_{i_2} $.
    Hence, the smallest nonzero term of $i(x_{\widetilde{s_i}g_1,i_1}^{m_1} \cdots x_{\widetilde{s_i}g_k,i_k}^{m_k})-1$ is $X_{\widetilde{s_i}h_1,j_1} \cdots X_{\widetilde{s_i}h_l,j_l}$ and its coefficient is the same as $X_{h_1,j_1} \cdots X_{h_l,j_l}$.
    Recall Equation \eqref{toPowerSeries}.
    More precisely, this fact follows from 
    $$ i(x_{\widetilde{s_i}g_1,i_1}^{m_1} \cdots x_{\widetilde{s_i}g_k,i_k}^{m_k}) = 1 + C X_{\widetilde{s_i}h_1,j_1} \cdots X_{\widetilde{s_i}h_l,j_l} + \cdots .$$
    So, the coefficient of the smallest nonzero term (except for the constant term) is $C$, which is positive.
    This implies that $i(x_{\widetilde{s_i}g_1,i_1}^{m_1} \cdots x_{\widetilde{s_i}g_k,i_k}^{m_k}) > 1$.
    
    \item $s_i^{-1} \cdot a \in P_{\prec}$ \\
    This part is essentially the same as in the first case.
    From the last two formulae in Lemma \ref{Calculation_s_x}, we have
    $$ s_i^{-1} \cdot a = x_{\widetilde{s_i}^{-1},i}^{-1}
    x_{\widetilde{s_i}^{-1}g_1,i_1}^{m_1} \cdots x_{\widetilde{s_i}^{-1}g_k,i_k}^{m_k} 
    x_{\widetilde{s_i}^{-1},i} .$$
    We want to show $s_i^{-1} \cdot a \succ 1$. 
    Since $\prec$ is a bi-order on $F_{\infty}$,
    it is enough to check $$ x_{\widetilde{s_i}^{-1}g_1,i_1}^{m_1} \cdots x_{\widetilde{s_i}^{-1}g_k,i_k}^{m_k} \succ 1 . $$ 
    Consider $i( x_{\widetilde{s_i}^{-1}g_1,i_1}^{m_1} \cdots x_{\widetilde{s_i}^{-1}g_k,i_k}^{m_k} ) - 1 \in \ZZ[[X_{g,i}]] $. 
    Its smallest nonzero term is $X_{\widetilde{s_i}^{-1}h_1,j_1} \cdots X_{\widetilde{s_i}^{-1}h_l,j_l}$, and has a positive coefficient. 
    Hence, $s_i^{-1} \cdot a \in P_{\prec}$.
\end{enumerate}
 Therefore, $w \cdot a \in P_{\prec}$ for every $w \in F_n$, and $P \cong F_{\infty} \rtimes F_n$ is bi-orderable.
\end{proof}

\begin{rmk} \label{free-by-free BO}
    In general, a free-by-free group $F_{\infty}\rtimes F_n$ is not bi-orderable. 
    To construct an example, assume that $F_\infty$ is freely generated by $\{a_i\}_{i\in\mathbb{Z}}$ and $F_n$ is freely generated by $\{b_1,\ldots,b_n\}$. 
    Define a group homomorphism $\varphi:F_{n}\to \Aut(F_\infty)$ defined by $\varphi(b_j)(a_0)=a_j a_0^{-1} a_j^{-1}$ and $\varphi(b_j)(a_i)=a_i$ for $i\neq 0$. 
    Each $\varphi(b_j)$  is an automorphism of $F_\infty$, as it is a composition of transvections and an inversion. %\cite{MR1356145}
    Thus, we can define a free-by-free group $F_{\infty}\rtimes_{\varphi} F_n$ via $\varphi$. 
    This group contains an element $b_1 a_0 b_1^{-1} = a_1 a_0^{-1} a_1^{-1}$. However, both the left-hand side and right-hand side cannot simultaneously be positive or negative whenever a bi-order is defined on the semi-direct product. 
    Indeed, if $a_0>1$, then $b_1 a_0 b_1^{-1}>1$, but $a_1 a_0^{-1} a_1^{-1}<1$, which leads to a contradiction.

    As aforementioned, Bridson constructed a free-by-free group $M_{\Gamma}=F_{\infty}\rtimes F_4$ with an embedding $M_{\Gamma} \hookrightarrow (F_4 * \Gamma) \times F_4$ such that $ \widehat{M_{\Gamma}} \cong \widehat{F_4 * \Gamma} \times \widehat{F_4}$ \cite{MR4732948}. Since free-by-free groups may not be bi-orderable, it is unclear whether $M_{\Gamma}$ is bi-orderable. This explains why our main theorems cannot be obtained directly from Bridson's result.
\end{rmk}

\section{Proof of the main results} \label{sec:proof2}
 To establish the main results, we need the following technical ingredient.

\begin{lem} \label{Core-Lem}
 There exists a finitely presented group $G$ satisfying all of the following.
 \begin{itemize}
    \item $\widehat{G}=1$,
    \item $H_2(G,\ZZ)=0$,
    \item $G$ is left-orderable,
    \item $G$ contains a finitely generated, residually finite, and not bi-orderable subgroup $S$.
 \end{itemize}
\end{lem}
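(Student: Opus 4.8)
The plan is to produce $G$ by embedding a small, explicit non-bi-orderable group into a finitely presented left-orderable group that is engineered to have no nontrivial finite quotients and vanishing second homology. For the subgroup $S$ I would take the Klein bottle group $K=\langle a,b \mid aba^{-1}=b^{-1}\rangle=\BS(1,-1)$. It is finitely presented and residually finite, being polycyclic (an extension $1\to\ZZ\to K\to\ZZ\to 1$); the same extension, together with the stability of left-orderability under group extensions, shows $K$ is left-orderable. Finally, $K$ is \emph{not} bi-orderable: the relation yields $a^{-1}ba=b^{-1}$, so $b\cdot(a^{-1}ba)=bb^{-1}=1$ exhibits $b\neq 1$ as a generalized torsion element, and bi-orderable groups contain no generalized torsion elements. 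Thus any group $G$ as in the statement that merely \emph{contains} $K$ automatically supplies the required subgroup $S=K$.

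The core of the argument is therefore the embedding $K\hookrightarrow G$ with $G$ finitely presented, left-orderable, $\widehat G=1$, and $H_2(G;\ZZ)=0$. First observe that $\widehat G=1$ already forces $G$ to be perfect: since $G$ is finitely generated, $G^{\mathrm{ab}}$ is a finitely generated abelian group, and a nontrivial one has a nontrivial finite quotient, so $\widehat G=1$ gives $G^{\mathrm{ab}}=0$, i.e.\ $H_1(G;\ZZ)=0$. Hence the two homological hypotheses together say exactly that $G$ is \emph{superperfect}. A clean way to guarantee $\widehat G=1$ is to take $G$ infinite and \emph{simple}, for then $G$ has no proper finite-index subgroups and no nontrivial finite quotients whatsoever. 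The strategy is thus to realize $K$ inside a finitely presented, infinite, simple, left-orderable, superperfect group.

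To build such a target I would invoke the circle of constructions of finitely generated simple left-orderable groups acting on the line in the style of Hyde--Lodha, in a form that is simultaneously (i) finitely presented and (ii) universal for countable left-orderable groups, so that the left-orderable group $K$ embeds. Simplicity and infiniteness then deliver $\widehat G=1$ and $H_1(G;\ZZ)=0$ for free. The genuinely delicate point is $H_2(G;\ZZ)=0$, since simplicity gives no control over the Schur multiplier. I would arrange this by carrying out the embedding into an \emph{acyclic} target, using a left-orderability-preserving analogue of the mitotic/binate enlargement (the aim being to show that every countable left-orderable group embeds in a left-orderable acyclic group, all inside $\Homeop(\mathbb R)$), and then bootstrapping to a finitely presented group with the same vanishing homology and no finite quotients. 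If a universal simple left-orderable group can be chosen superperfect outright, the embedding is immediate.

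The main obstacle is precisely this simultaneity. Each pair of conditions is attainable by a standard technique --- embeddings into finitely presented groups, embeddings into left-orderable groups, acyclic (hence superperfect) enlargements, and simple groups with $\widehat G=1$ --- but forcing all four at once (\textbf{finite presentation}, \textbf{left-orderability}, \textbf{$\widehat G=1$}, and \textbf{$H_2=0$}) is the crux. The resolution should come from a single construction that is at once left-order-compatible, acyclicity-forcing (binate-type), and quotient-rigid (simple, or at least with no finite quotients), realized in finitely presented form; verifying that such a construction retains finite presentability while killing $H_2$ is where the real work lies, and is presumably the content addressed by the acknowledged discussions with Jan Kim.
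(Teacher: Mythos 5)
Your handling of the subgroup $S$ is correct and coincides with the paper's choice: $S=\BS(1,-1)$ is polycyclic, hence finitely generated and residually finite, and the relation gives $a^{-1}ba=b^{-1}$, so $b\,(a^{-1}ba)=1$ exhibits $b$ as a generalized torsion element, ruling out bi-orderability. But the heart of the lemma --- actually producing $G$ --- is not proved in your proposal; it is a program whose crux you yourself flag as unresolved. Concretely: (i) one of your stated desiderata is impossible --- a finitely presented group cannot be universal for countable left-orderable groups, since any countable group has only countably many finitely generated subgroups while there are uncountably many pairwise non-isomorphic finitely generated left-orderable groups; (ii) the Hyde--Lodha-type simple left-orderable groups you invoke are only known to be finitely \emph{generated}, and finite presentability of such groups is not available, let alone with controlled Schur multiplier; (iii) the binate/mitotic acyclic enlargements that would kill $H_2$ are far from finitely presented, and ``bootstrapping to a finitely presented group'' via Higman-style embedding theorems preserves neither left-orderability, nor $H_2(\,\cdot\,;\ZZ)=0$, nor the absence of finite quotients. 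As you correctly note, simplicity yields $H_1=0$ but says nothing about $H_2$, so the condition $H_2(G;\ZZ)=0$ remains entirely open in your outline. In short, every pairwise combination of the four conditions is plausible by standard technology, but no step of your argument actually achieves all of them at once, and that simultaneity is precisely the content of the lemma.

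The paper sidesteps all of this machinery with one explicit, elementary construction: $G=H*_{a_1=t}\BS(2,3)$, the amalgam over $\ZZ$ of the Higman group $H$ with $\BS(2,3)$. Then $\widehat{G}=1$ because any homomorphism $f$ to a finite group kills $H$ (Higman's theorem), so $f(a_1)=1$, and the relation $a_1^{-1}b^2a_1=b^3$ forces $f(b)^2=f(b)^3$, hence $f(b)=1$; $H_2(G;\ZZ)=0$ because $G$ is perfect (all five generators die in the abelianization) and admits a balanced presentation (five generators, five relators); $G$ is left-orderable because $H$ and $\BS(2,3)$ are left-orderable and amalgamated products of left-orderable groups over $\ZZ$ are left-orderable (Bludov--Glass); and $S=\BS(1,-1)$ embeds into $\BS(2,3)\le G$ by a result of Levitt. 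The structural lesson, if you want to repair your approach, is that ``no nontrivial finite quotients'' need not be imposed on the ambient group via simplicity: it can be imported from a single seed group with $\widehat{H}=1$ and propagated through an amalgamation that entangles the remaining generators with it, which keeps the construction finitely presented and makes $H_2$ computable by hand rather than a deep obstruction.
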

\begin{proof}
 Consider the following group 
 $$ G \coloneqq \left< a_1, a_2, a_3, a_4, b \ | \ 
 a_2^{-1}a_1a_2 = a_1^2, \ a_3^{-1}a_2a_3=a_2^{2}, \ a_4^{-1}a_3a_4=a_3^{2}, \ a_1^{-1}a_4a_1=a_4^{2}  , \ a_1^{-1} b^{2} a_1 = b^{3} \right>, $$ and 
 we claim that $G$ satisfies all the conditions.
Notice that $$G = H *_{a_1=t} \BS(2,3), $$ where
 $H$ is the Higman group, namely, $$ H = \left< a_1, a_2, a_3, a_4 \ | \ 
 a_2^{-1}a_1a_2 = a_1^2, \ a_3^{-1}a_2a_3=a_2^{2}, \ a_4^{-1}a_3a_4=a_3^{2}, \ a_1^{-1}a_4a_1=a_4^{2}
 \right>, $$ and
 $\BS(2,3) = \left< b,t \ | \ t^{-1} b^{2} t = b^{3} \right>$.
 
 Obviously, $G$ is finitely presented.
 To show $\widehat{G}=1$, we show that for any finite group $F$ and any group homomorphism $f:G \to F$, $f$ should be the trivial homomorphism.
 Recall that the Higman group $H$ is known to have no non-trivial finite quotients (see \cite{higman1951finitely}).
 Since the subgroup generated by $a_1, a_2, a_3, a_4$ of $G$ is isomorphic to the Higman group $H$, for any a group homomorphism $f:G \to F$ where $F$ is a finite group, we obtain $$ f(a_1)=f(a_2)=f(a_3)=f(a_4) = 1_F, $$
  where $1_F$ is the identity element in $F$.
From the relation $a_1^{-1} b^{2} a_1 = b^{3}$, we get $f(b)^{2}=f(b)^{3}$ since $f(a_1)=1_F$.
  This implies that $f(b)=1_F$, and hence, $f$ is the trivial homomorphism.
  
 The condition $H_2(G,\ZZ)=0$ is obtained from the fact that $G$ has a balanced presentation and $H_1(G,\ZZ)=0$. Use Lemma 4.2 in \cite{MR2119723}.

 %Note that $G$ is finitely generated and has the trivial abelianization.
 %Since a finitely generated bi-orderable group always has a surjective group homomorphism onto $\ZZ$, $G$ is not bi-orderable \cite[Theorem 2.19]{MR3560661}.
 To show that $G$ is left-orderable, we use the fact that the amalgamated product $A*_\ZZ B$ is left-orderable when $A$ and $B$ are left-orderable (\cite[Corollary 5.3]{Bludov2009Word}, \cite[Corollary 2.9]{clay2023orderable}).
 Since $H$ and $\BS(2,3)$ are left-orderable (\cite[Theorem A]{MR4053280}, \cite[Example 1.16]{clay2023orderable}), the desired property now follows.

 Now, the only remaining thing to prove is that $G$ contains a finitely generated, residually finite, and not bi-orderable subgroup $S$.
 From the construction, $G$ has $\BS(2,3)$ as a subgroup.
 Since $\BS(1,-1)$ embeds into $\BS(2,3)$ \cite[Proposition 7.11]{Levitt2015Quotients}, 
 $G$ has $\BS(1,-1)$ as a subgroup.
 Note that $\BS(1,-1)$ is finitely generated and residually finite \cite{MR285589}. As $\BS(1,-1)$ has a generalized torsion element, it is not bi-orderable \cite[Lemma 2.1 and Lemma 2.3]{Motegi2017Generalized}.
 Therefore, we complete the proof by taking $S=\BS(1,-1)$.
\end{proof}
\begin{rmk}
 For each positive integer $m$, consider 
 \begin{align*}
  G_m & \coloneqq H *_{a_1=t} \BS(2m,2m+1) \\
 &= \left< a_1, a_2, a_3, a_4, b \ | \ 
 a_2^{-1}a_1a_2 = a_1^2, \ a_3^{-1}a_2a_3=a_2^{2}, \ a_4^{-1}a_3a_4=a_3^{2}, \ a_1^{-1}a_4a_1=a_4^{2}  , \ a_1^{-1} b^{2m} a_1 = b^{2m+1} \right>.
 \end{align*}
 Note that $\BS(2m,2m+1)$ also contains $\BS(1,-1)$ \cite[Proposition 7.11]{Levitt2015Quotients}.
 Since replacing $\BS(2,3)$ with $\BS(2m,2m+1)$ does not affect the proof of Lemma \ref{Core-Lem}, 
 $G_m$ also satisfies the conditions in the Lemma.
\end{rmk}

\begin{rmk}
 The Higman group $H$ satisfies all the conditions in Lemma \ref{Core-Lem} except for the last condition.
 Obviously, $H$ is finitely presented.
 The condition $\widehat{H}=1$ follows from the fact that it has no non-trivial finite quotients \cite{higman1951finitely}. 
 It is proved that $H_2(H,\ZZ)=0$ \cite{MR2119723}, and $H$ is left-orderable \cite{MR4053280}.
 %However, $H$ is not bi-orderable since $H$ is finitely generated but $H_1(H,\ZZ)=0$.
 We do not know yet whether $H$ contains a finitely generated, residually finite, and not bi-orderable subgroup.
% If this question is affirmatively answered, ~~~~
 \end{rmk}

 Now we are ready to show the main theorem.

\begin{cor} \label{Thm-BOPP}
 Bi-orderability is not a profinite property.
\end{cor}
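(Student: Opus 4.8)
The plan is to exhibit two finitely generated, residually finite groups with isomorphic profinite completions exactly one of which is bi-orderable. The raw material is the pair $(G,S)$ produced by Lemma~\ref{Core-Lem}: $G$ is finitely presented and left-orderable with $\widehat G=1$ and $H_2(G;\ZZ)=0$, while $S\le G$ is finitely generated, residually finite, and not bi-orderable (concretely $S=\BS(1,-1)$). Writing $n$ for the number of generators of $G$, I would first feed $G$ into Theorem~\ref{Thm-BO} to obtain the bi-orderable fiber product $P<(F_n*G)\times F_n$ of $\pi_1:F_n*G\to G$ and $\pi_2:F_n\to G$.

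Next I would restrict $\pi_1$ to $q_1:=\pi_1|_{F_n*S}:F_n*S\to G$, which remains surjective because $q_1|_{F_n}=\pi_2$ is onto, and form the fiber product $Q<(F_n*S)\times F_n$ of $q_1$ and $\pi_2$. Since $F_n*S\le F_n*G$ and $q_1$ is the restriction of $\pi_1$, one checks directly that $Q\le P$; as bi-orderability is inherited by subgroups, $Q$ is bi-orderable. The two groups I will compare are $Q$ and the ambient direct product $(F_n*S)\times F_n$: the latter contains $S$ and is therefore not bi-orderable, whereas $Q$ is. To certify that these genuinely witness the failure of the profinite property, I would record that both are residually finite — $F_n$ and $S$ are residually finite, free and direct products preserve this, and $Q$ embeds in $(F_n*S)\times F_n$ — and that their profinite completions agree. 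The latter comes from Proposition~\ref{Fiber_profinite_completion} applied with $G_1=F_n*S$, $G_2=F_n$, and common quotient $G$, whose hypotheses $\widehat G=1$, $H_2(G;\ZZ)=0$, and finite presentability are exactly what Lemma~\ref{Core-Lem} guarantees; it yields $\widehat Q\cong\widehat{F_n*S}\times\widehat{F_n}\cong\widehat{(F_n*S)\times F_n}$, the last step because completion commutes with direct products.

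The one point that genuinely requires care, and which I expect to be the main obstacle, is that $Q$ must be finitely generated in order to qualify under the definition of a profinite property. Here I would use the short exact sequence obtained from the first-coordinate projection of $Q$, namely $1\to\ker\pi_2\to Q\to F_n*S\to 1$, whose kernel is $1\times\ker\pi_2$ and whose quotient $F_n*S$ is finitely generated. The conjugation action of $Q$ on this kernel realizes the full conjugation action of $F_n$, since surjectivity of $q_1$ places no constraint on the second coordinate; and $\ker\pi_2$ is the normal closure in $F_n$ of the defining relators of $G$, which are finite in number precisely because $G$ is finitely presented. Hence $\ker\pi_2$ is finitely generated as a normal subgroup of $Q$, and together with finite generation of $F_n*S$ this makes $Q$ finitely generated.

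With finite generation secured, the conclusion is immediate: $Q$ and $(F_n*S)\times F_n$ are finitely generated residually finite groups with $\widehat Q\cong\widehat{(F_n*S)\times F_n}$, yet $Q$ is bi-orderable and $(F_n*S)\times F_n$ is not. This establishes Corollary~\ref{Thm-BOPP}.
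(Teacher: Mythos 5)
Your proposal is correct and follows essentially the same route as the paper: the pair $(G,S)$ from Lemma~\ref{Core-Lem}, the bi-orderable fiber product $P$ from Theorem~\ref{Thm-BO}, the restricted fiber product $Q<(F_n*S)\times F_n$, and Proposition~\ref{Fiber_profinite_completion} giving $\widehat{Q}\cong\widehat{F_n*S}\times\widehat{F_n}$, with $(F_n*S)\times F_n$ residually finite but not bi-orderable. The only divergence is that where the paper cites Lemma~1.1 of \cite{MR4732948} for finite generation of $Q$, you reprove it inline via the sequence $1\to 1\times\ker\pi_2\to Q\to F_n*S\to 1$, correctly using finite presentability of $G$ and surjectivity of the second-coordinate projection of $Q$ onto $F_n$ --- a sound argument that is in substance the proof of the cited lemma.
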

\begin{proof}
 Choose $G$ as in Lemma \ref{Core-Lem}, and let $S < G$ be a finitely generated, residually finite, and non-bi-orderable subgroup.
 Consider the fiber products $P$ of $\pi_1:F_n*G \to G$ and $\pi_2:F_n \to G$ and $Q$ of $q_1 \coloneqq \pi_1|_{F_n * S} \colon F_n * S \to G$ and $q_2 \coloneqq \pi_2 \colon F_n \to G$.
 Recall that since $S<G$, we get $q_1$ from $\pi_1$ by the restriction, and $q_1$ is surjective.
 In addition, $Q$ is a subgroup of $P$.
 By Proposition \ref{Fiber_profinite_completion}, we have an isomorphism  $$\widehat{Q} \cong \widehat{F_n * S} \times \widehat{F_n}. $$

 We assert that this isomorphism gives a concrete example to prove the theorem.
 Since $P$ is bi-orderable (Theorem \ref{Thm-BO}), so is $Q$.
  Since $S$ is residually finite, $(F_n * S)\times F_n$ is residually finite and hence, its subgroup $Q$ is residually finite.
 Lastly, $Q$ is finitely generated due to Lemma 1.1 in \cite{MR4732948}.
 
 Clearly, the right side group $(F_n * S) \times F_n$ is a finitely generated residually finite group.
 However, $(F_n * S) \times F_n$ is not bi-orderable.
 Therefore, we conclude that the bi-orderability is not a profinite property.
\end{proof}

\begin{rmk}
 One may wonder why we do not directly use the fiber product $P$ to conclude Corollary \ref{Thm-BOPP}.
 Indeed, we get an isomorphism  $\widehat{P} \cong \widehat{F_n * G} \times \widehat{F_n}$, and
 $P$ is bi-orderable but $(F_n*G) \times F_n$ is not bi-orderable.
 However, the group $(F_n*G) \times F_n$ is not residually finite, so from this isomorphism, we cannot conclude that bi-orderability is not a profinite property.
\end{rmk}

 From the proof of Corollary \ref{Thm-BOPP}, we can also deduce the following result by taking $S \cong \BS(1,-1)$.
 Note that $\BS(1,-1)$ has a generalized torsion element, whereas a bi-orderable group cannot have generalized torsion elements.

\begin{cor} \label{Thm-GTPP}
 The existence of a generalized torsion element is not a profinite property.
\end{cor}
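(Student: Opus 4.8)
The plan is to reuse, essentially verbatim, the construction from the proof of Corollary~\ref{Thm-BOPP}, now specializing the subgroup to $S \cong \BS(1,-1)$. This is legitimate because Lemma~\ref{Core-Lem} exhibits precisely such a copy of $\BS(1,-1)$ inside the group $G$. Running the identical argument then produces a fiber product $Q < (F_n * S) \times F_n$ that is finitely generated (by Lemma~1.1 of \cite{MR4732948}), residually finite (since $S$ is), and bi-orderable (being a subgroup of the bi-orderable group $P$ furnished by Theorem~\ref{Thm-BO}); moreover Proposition~\ref{Fiber_profinite_completion} yields $\widehat{Q} \cong \widehat{F_n * S} \times \widehat{F_n}$. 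Thus $Q$ and $(F_n * S) \times F_n$ are finitely generated residually finite groups with isomorphic profinite completions.

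The next step is to show that these two groups differ with respect to the property in question. On one side, $(F_n * S) \times F_n$ contains $S = \BS(1,-1)$, which possesses a generalized torsion element; since the defining identity $(g_1^{-1} g g_1) \cdots (g_k^{-1} g g_k) = 1$ is an equation that persists in every overgroup, this element remains a generalized torsion element of $(F_n * S) \times F_n$. On the other side, $Q$ is bi-orderable, and a bi-orderable group cannot contain a generalized torsion element: letting $P_Q$ denote its conjugation-invariant positive cone, if $g \neq 1$ lies in $P_Q$ then every conjugate $g_i^{-1} g g_i$ again lies in $P_Q$, so any product of such conjugates is positive and in particular nontrivial; the case $g^{-1} \in P_Q$ is symmetric. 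Hence $Q$ has no generalized torsion element.

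Combining these observations, $Q$ and $(F_n * S) \times F_n$ are finitely generated residually finite groups with isomorphic profinite completions, yet exactly one of them contains a generalized torsion element. This proves that the existence of a generalized torsion element is not a profinite property. The only point requiring genuine care is the elementary pair of facts that bi-orderability obstructs generalized torsion and that such an element survives under passage to an overgroup; everything else is inherited directly from the apparatus already assembled for Corollary~\ref{Thm-BOPP}, so I anticipate no serious obstacle beyond this bookkeeping.
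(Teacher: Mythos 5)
Your proposal is correct and follows essentially the same route as the paper: the paper deduces Corollary~\ref{Thm-GTPP} directly from the proof of Corollary~\ref{Thm-BOPP} by taking $S \cong \BS(1,-1)$, observing that $\BS(1,-1)$ has a generalized torsion element while the bi-orderable group $Q$ cannot. The only difference is that you spell out the two elementary facts (persistence of generalized torsion in overgroups, and the positive-cone obstruction in bi-orderable groups) that the paper handles by citation, which is harmless.
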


%%%%%%%%%%%%%%%%%%%%%%%%%%%%%%%%%%%%%%%%%%%%%%%%%%%%%%%%%%%%%%%%%%%%%%

\begin{rmk}
One may ask whether left- or bi-circular orderability is a profinite property. 
First, these notions of circular orderability are strongly related to left- and bi-orderability, but they are not the main focus of this paper. 
We refer to \cite{MR3784820}, \cite{MR3813208}, \cite{MR3887426}, \cite{MR4275867} and \cite{MR4305778} for more on circular orderability.
Since every left-orderable group is left-circular orderable, it follows from Bridson's result \cite{MR4732948} that left-circular orderability is not a profinite property. 
Indeed, our contribution implies that bi-circular orderability is not a profinite property, 
since bi-orderability and bi-circular orderability are equivalent for torsion-free groups
% since bi-orderability and bi-circular orderability are equivalent when the given group is torsion-free 
\cite[Proposition 3.2]{MR4275867}.
\end{rmk}

\bibliography{preprint}
\bibliographystyle{abbrv}

\end{document}